\theoremstyle{plain}
\newtheorem{theorem}{Theorem}[section]
\newtheorem{corollary}[theorem]{Corollary}
\newtheorem{proposition}[theorem]{Proposition}
\newtheorem{lemma}[theorem]{Lemma}
\theoremstyle{remark}
\newtheorem{remark}[theorem]{Remark}
\theoremstyle{definition}
\newtheorem{definition}[theorem]{Definition}
\newtheorem{construction}[theorem]{Construction}
\newtheorem{condition}[theorem]{Condition}
\newtheorem{proposition-definition}[theorem]{Proposition-Definition}
\newtheorem{convention}[theorem]{Convention}
\newcommand{\C}{\mathbb{C}}
\newcommand{\G}{\mathbb{G}}
\newcommand{\pp}{\mathbb{P}}
\newcommand{\Z}{\mathbb{Z}}
\newcommand{\Q}{\mathbb{Q}}
\newcommand{\vv}{\mathrm{virt}}
\title{Virtual push-forwards}
\author{Cristina Manolache}
\date{}
\begin{document}
\maketitle
\begin{abstract}Let $p:F\to G$ be a morphism of stacks of positive \emph{virtual} relative dimension $k$ and let $\gamma\in H^k(F)$. We give sufficient conditions for $p_*\gamma\cdot[F]^{\vv}$ to be a multiple of $[G]^{\vv}$. We apply this result to show an analogue of the conservation of number for virtually smooth families. We show implications to Gromov-Witten invariants and give a new proof of a theorem in \cite{mop} which compares the virtual classes of moduli spaces of stable maps and moduli spaces of stable quotients.
\end{abstract}

\tableofcontents
\section {Introduction}
Virtual fundamental classes have been introduced by Li-Tian \cite{lg} and Behrend-Fantechi \cite{bf} and in the past fifteen years have become a useful tool when one has to deal with badly-behaved (i.e singular, with several components of possibly different dimensions) moduli spaces. One of the main problems when working with virtual fundamental classes is that in certain situations they fail to behave as fundamental classes do. One easy example is the following. Let  $f:F\to G$ be a finite morphism of stacks and suppose that $F$ and $G$ have pure dimension. Let $G_1,...,G_s$ denote the irreducible components of $G$. Then we have that
\begin{equation*}
f_*[F]=n_1[G_1]+...+n_s[G_s]
\end{equation*}
for some $n_1,...,n_k\in\Q$. On the contrary, given a morphism $f:F\to G$ of stacks which possess virtual classes of the same virtual dimension, we have no reasons to believe that the following relation holds
\begin{equation*}
f_*[F]^{\vv}=n_1[G_1]+...+n_s[G_s]
\end{equation*}
where $G_1,...,G_k$ are cycles on $G$ such that $[G]^{\vv}=[G_1]+...+[G_s]$.
\\In this article we find sufficient conditions for the above condition to hold. More generally, let $f:F\to G$ be a morphism of stacks which possess virtual classes of dimension $k_1$ respectively $k_2$ such that $k:=k_1-k_2\geq0$ and let $\gamma\in H^{k}$. The main result states that if the relative obstruction theory of $f$ is perfect, and $G$ is connected, then the push-forward of $\gamma\cdot [F]^{\vv}$ along $f$ is equal to a scalar multiple of the virtual class of $G$.
\\As applications we show that given a virtually smooth family $f:F\to G$, we have an analogous statement to the conservation of number principle. This shows that the virtual Euler characteristic is locally constant in virtually smooth families.
\\We also show that for any smooth fibration $p:X\to\pp^r$, we have that for the the induced morphism $\bar{p}: \overline{M}_{g,n}(X,\beta)\to\overline{M}_{g,n}(\pp^r,p_*\beta)$ and any $\gamma\in H^{k}( \overline{M}_{g,n}(X,\beta))$ we have that  $\bar{p}_*\gamma\cdot[\overline{M}_{g,n}(X,\beta)]^{\vv}=n[\overline{M}_{g,n}(\pp^r,p_*\beta)]^{\vv}$, for some $n\in\Q$.
\\
\\This problem has already been studied by A. Gathmann for the natural map between the moduli space of stable maps to a projective bunlde $p:\pp_X(L\oplus\mathcal{O})\to X$ and the moduli space of stable maps to $X$. His computation uses localization and it is rather long. I was also influenced by work of B. Kim \cite{kim}, who compared a certain intersection product on the moduli space of stable maps to a projective bundle over a variety $X$ to the virtual class of the moduli space of stable maps to $X$.
\\Our approach is similar to the one of H-H Lai, who analyzes the map between the moduli spaces of stable maps to a certain blow-up and the moduli space of stable maps to the base variety. Our Lemma \ref{vpf} is a reformulation of arguments present in \cite{h}.
\\In \cite{mop} Marian, Oprea and Pandharipande have constructed a new compactification of the space of genus $g$ curves in Grassmannians which admits a virtual class. In the special case of curves in projective spaces there exists a morphism between the moduli space of genus $g$ stable maps of degree $d$ to a projective space $\pp^n$ and the moduli space of rank $n-1$ quotients (see section \ref{mapstoquot}). We give a different proof to theorem 3 in \cite{mop} which compares the two virtual classes.

\paragraph{Acknowledgements} I owe the main statement of this paper to Barbara Fantechi and Angelo Vistoli who had suggested me to work with homology groups, rather than Chow groups. I am very grateful to Ionu\c{t} Ciocan-Fontanine, Gavril Farkas, Carel Faber and Y.P. Lee for several useful discussions and suggestions. I would also like to thank Ionu\c{t} for making the manuscript \cite{kim} available to me.
\section{Background}
\paragraph{Notation and conventions.}
We take the ground field to be $\C$.
\\An Artin stack is an algebraic stack in the sense of \cite{lau} of finite type over the ground field.
\\Unless otherwise specified we will try to respect the following convention: we will usually denote schemes by $X,\ Y,\ Z$, etc, Deligne-Mumford stacks by $F,\ G,\ H$, etc. and Artin stacks for which we know that they are not Deligne-Mumford stacks (such as the moduli space of genus-$g$ curves or vector bundle stacks) by gothic letters $\mathfrak{M}_{g},\ \mathfrak{E},\ \mathfrak{F}$, etc. 
\\By a commutative diagram of stacks we mean a 2-commutative diagram of stacks and by a cartesian diagram of stacks we mean a 2-cartesian diagram of stacks.
\\Chow groups for schemes are defined in the sense of \cite{f}; this definition has been extended to DM stacks (with $\Q$-coefficients) by Vistoli (\cite{v}) and to algebraic stacks (with $\Z$-coefficients) by Kresch (\cite{k}). We will consider Chow groups (of schemes/stacks) with $\mathbb{Q}$-coefficients.
\\By Homology we mean Borel-Moore homology (see the Appendix in \cite{tom} for a definition for stacks).
\\For a fixed stack $F$ we denote by $\mathcal{D}_F$ the derived category of coherent $\mathcal{O}_F$ modules.
\\For a fixed stack $F$ we denote by $L_F$ its cotangent complex defined in \cite{ol}.

\subsection{Obstruction Theories}

\begin{definition}
 Let $E^{\bullet}\in\mathcal{D}^{\leq0}_F$. $E^{\bullet}$ is said to be of perfect amplitude if there exists $n\geq0$ such that $E^{\bullet}$ is locally isomorphic to $[E^{-n}\to...\to E^{0}]$, where $\forall i\in\{-n,...,0\}$, $E^i$ is a locally free sheaf.
\end{definition}

\begin{definition}Let $E^{\bullet}\in\mathcal{D}^{\leq0}_X$. Then a homomorphism $\Phi: E^{\bullet}\to L^{\bullet}$ in $\mathcal{D}_F$ is called an obstruction theory if $h^0(\Phi)$ is an isomorphism and $h^{-1}(\Phi)$ is surjective. If moreover, $E^{\bullet}$ is of perfect amplitude, then $E^{\bullet}$ is called a perfect obstruction theory.
\end{definition}
\begin{convention}Unless otherwise stated by a perfect obstruction theory we will always mean of perfect amplitude  contained in $[-1,0]$.
\end{convention}

\subsection{Cone stacks}
\begin{definition}
Let $X$ be a scheme and $\mathcal{F}$ be a coherent sheaf on $X$. We call $C(\mathcal{F}):=Spec Sym(\mathcal{F})$ an abelian cone over $X$.
\end{definition}
As described in \cite{bf}, Section 1, every abelian cone $C(\mathcal{F})$ has a section $0:X\to C(\mathcal{F})$ and an $\mathbb{A}^1$-action.
\begin{definition} An $\mathbb{A}^1$-invariant subscheme of $C(\mathcal{F})$ that contains the zero section is called a cone over $X$.
\end{definition}
Similarly, Behrend and Fantechi define in \cite{bf} Section 1, abelian cone stacks and cone stacks. Let us recall the definition.
\begin{definition}
Let $F$ be a stack and let $[E^0\to E^1]$ be an element in $\mathcal{D}_F$. We call the stack quotient $[E^1/E^0])$ (in the sense of \cite{bf} Section 2) an abelian cone stack over stack $F$ .
\\A cone stack is a closed substack of an abelian cone stack invariant under the action of $\mathbb{A}^1$ and containing the zero section.
\end{definition}
\begin{convention}\label{stack}From now on, unless otherwise stated, by cones we will mean cone-stacks.
\end{convention}
\subsection{Virtual pull-backs in Chow groups}
In the following we recall the main results in \cite{eu}.
\begin{condition} We say that a morphism $F\to G$ of algebraic stacks and a vector bundle stack $\mathfrak{E}\to F$ satisfy condition ($\star$) if we have fixed a closed embedding $C_{f}\hookrightarrow \mathfrak{E}$.

\end{condition}
\begin{convention}
Will shortly say that the pair $(f,\mathfrak{E})$ satisfies condition ($\star$).
\end{convention}

\begin{remark}
Let us consider a Cartesian diagram
\begin{equation*}
 \xymatrix {F^{\prime} \ar[r]\ar[d]_{p} & G^{\prime}\ar[d]^q\\
             F\ar[r]^f & G.}
\end{equation*}
If $\mathfrak{E}$ is a vector bundle on $F$ such that $C_{F/G}\hookrightarrow \mathfrak{E}$ is a closed embedding, then $C_{F^{\prime}/G^{\prime}}\hookrightarrow p^*\mathfrak{E}$ is a closed embedding.
\end{remark}
\begin{construction}
Let $F$ be a DM stack and $\mathfrak{E}$ a vector bundle stack of (virtual) rank $n$ on $F$ such that $(f,\mathfrak{E})$ that satisfies condition $(\star)$ for $f$, we construct a pull-back map $f^{!}_\mathfrak{E}:A_*(G)\to A_{*-n}(F)$ as the composition
\begin{equation*}
 A_*(G)\stackrel{\sigma}\rightarrow A_*(C_{F/G})\stackrel{i_*}{\rightarrow}A_*(\mathfrak{E})\stackrel{s^*}{\rightarrow} A_{*-n}(F),
\end{equation*}
where
\begin{enumerate}
\item $\sigma$ is defined on the level of cycles by $\sigma(\sum n_i[V_i])=\sum n_i [C_{V_i\times_GF/V_i}]$ 
\item $i_*$ is the push-forward via the closed immersion $i$ 
\item $s^*$ is the morphism of Proposition 5.3.2 in \cite{K:03}.
\end{enumerate}
The fact that $\sigma$ is well defined has been checked in \cite{eu}. 
 \\Going further, for any cartesian diagram
\begin{equation*}
 \xymatrix {F^{\prime} \ar[r]^{f^{\prime}}\ar[d]_{p} & G^{\prime}\ar[d]^q\\
             F\ar[r]^f & G}
\end{equation*}
such that $\mathfrak{E}\to F$ satisfies condition $(\star)$ for $f$, let $f^{!}_{\mathfrak{E}}:A_*(G^{\prime})\to A_{*-n}(F^{\prime})$ be the composition
\begin{equation*}
 A_*(G^{\prime})\stackrel{\sigma}\rightarrow A_*(C_{F^{\prime}/G^{\prime}})\stackrel{i_*}{\rightarrow} A_*(C_{F/G}\times_FF^{\prime})\stackrel{i_*}{\rightarrow}A_*(p^*\mathfrak{E})\stackrel{s^*}{\rightarrow}A_{*-n}(F^{\prime}).
\end{equation*}
\end{construction}
\begin{definition}\label{vp}
In the notation above, we call $f_{\mathfrak{E}}^!: A_*(G)\to A_*(F)$ a \textit{virtual pull-back}. When there is no risk of confusion we will omit the index.
\end{definition}
\begin{theorem} \label{relations}Consider a fibre diagram of DM stacks
\begin{equation*}
 \xymatrix {F^{\prime} \ar[r]^{f^{\prime}}\ar[d]_{q} & G^{\prime}\ar[d]^p\\
             F\ar[r]^f & G}
\end{equation*}
and let us assume that ${\mathfrak{E}}$ is a vector bundle stack of rank $d$ such that $(f,{\mathfrak{E}})$ satisfies condition $(\star)$ for $f$.
\\(i) (Push-forward) If $p$ is a proper morphism of DM-stacks and $\alpha\in A_k(G^{\prime})$, then $f^!_{\mathfrak{E}}p_*(\alpha)=q_*f^!_{\mathfrak{E}}\alpha$ in $A_{k-d}(F)$.\\
(ii)\ (Pull-back) If $p$ is flat of relative dimension $n$ and $\alpha\in A_k(G)$, then $f^!_{\mathfrak{E}}p^*(\alpha)=q^*f^!_{\mathfrak{E}}\alpha$ in $A_{k+n-d}(F^{\prime})$\\
(iii)(Compatibility) If $\alpha\in A_k(G^{\prime})$, then $f^{!}_{\mathfrak{E}}\alpha=f^{\prime!}_{g^*{\mathfrak{E}}}\alpha$ in $A_{k-d}(F^{\prime})$.
\end{theorem}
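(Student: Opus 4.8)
The plan is to derive all three statements from the corresponding compatibilities of the three elementary operations out of which $f^!_{\mathfrak{E}}$ is assembled in the Construction, namely the specialization $\sigma$, the closed-immersion push-forward $i_*$, and the Gysin map $s^*$ of the vector bundle stack. For (i) I would show that each of these commutes with proper push-forward, for (ii) that each commutes with flat pull-back, and for (iii) I would argue directly from the two definitions of the (extended) pull-back. Throughout I would use that the given cartesian square induces a chain of closed embeddings over $F'$,
\begin{equation*}
C_{F'/G'}\hookrightarrow C_{F/G}\times_F F'\hookrightarrow q^*\mathfrak{E},
\end{equation*}
which is precisely the data appearing in the extended construction of $f^!_{\mathfrak{E}}$.

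The operations $s^*$ and $i_*$ are the easy ingredients. For $s^*$ I would invoke the bivariance of Kresch's Gysin map for a vector bundle stack (Proposition 5.3.2 in \cite{K:03}): intersecting with the zero section commutes with proper push-forward and with flat pull-back, and is compatible with base change. For $i_*$, compatibility with proper push-forward is the functoriality $(p\circ i)_*=p_*\circ i_*$ of proper push-forward applied to the embeddings above, and compatibility with flat pull-back is the standard commutation of flat pull-back with proper push-forward in a cartesian square. I do not expect any difficulty in either of these steps.

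The main obstacle is the compatibility of the specialization $\sigma$ with proper push-forward and flat pull-back. The flat case is comparatively clean: since $\sigma$ is given on cycles by $\sigma(\sum n_i[V_i])=\sum n_i[C_{V_i\times_G F/V_i}]$ and the formation of the normal cone commutes with flat base change, one verifies $\sigma\, p^*=q^*\sigma$ cycle by cycle, the dimension shift being accounted for by the relative dimension $n$ of $p$. The proper case is genuinely delicate, because $p_*$ does not respect the presentation of a class as a sum of subvarieties, so one cannot argue naively on cycles. Here I would run the deformation to the normal cone of \cite{f}, Chapter 5, in the stacky form used in \cite{eu}: realizing $\sigma$ as specialization to the central fibre of the deformation space and using the proper map of deformation spaces induced by $p$, together with the fact that specialization commutes with proper push-forward, one obtains $\sigma\, p_*=p_*\,\sigma$ at the level of normal cones. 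Combining the three compatibilities yields (i) and (ii).

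Finally, for (iii) I would compare the extended pull-back, which by definition factors the embedding of $C_{F'/G'}$ through $C_{F/G}\times_F F'$ as in the displayed chain, with the direct pull-back $f^{\prime!}_{q^*\mathfrak{E}}$, whose definition uses the single embedding $C_{F'/G'}\hookrightarrow q^*\mathfrak{E}$. Since this single embedding is exactly the composite of the two embeddings in the chain, the two successive closed-immersion push-forwards of the extended definition compose to the one $i_*$ of the direct definition, while $\sigma$ and $s^*$ are identical in both. Functoriality of proper push-forward along closed immersions then gives $f^!_{\mathfrak{E}}\alpha=f^{\prime!}_{q^*\mathfrak{E}}\alpha$, which is (iii).
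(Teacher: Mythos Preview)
Your outline is the right one: decomposing $f^!_{\mathfrak{E}}$ as $s^*\circ i_*\circ\sigma$ and checking that each factor commutes with proper push-forward, flat pull-back, and base change is exactly how the argument runs, with the deformation-to-the-normal-cone handling of $\sigma$ for the proper case being the one nontrivial step. However, the present paper gives no proof of this theorem at all: it appears in the Background section under the heading ``Virtual pull-backs in Chow groups'', introduced by ``In the following we recall the main results in \cite{eu}'', and is simply stated without proof as a result imported from that reference. So there is nothing to compare against here; your sketch is essentially the proof from \cite{eu}, not a proof the paper omits or replaces.
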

\begin{remark}
 As remarked before, the generalized Gysin pull-back is well-defined for smooth pull-backs. Let us show that the two definitions agree. By (i) above, it is enough to prove the claim for $\alpha=[G]$, for which it follows trivially by construction. 
\end{remark}

\begin{theorem}(Commutativity)
Consider a fiber diagram of Artin stacks
\begin{equation*}
\xymatrix{
F^{\prime}\ar[d]\ar[r]&G{^\prime}\ar[d]^g\\
F\ar[r]^f&G}
\end{equation*}such that $F^{\prime}$ and $G^{\prime}$ admit stratifications by global quotients. Let us assume $f$ and $g$ are morphisms of DM-type and let ${\mathfrak{E}}$ and ${\mathfrak{F}}$ be vector bundle stacks of rank $d$, respectively $e$ such that $(f,{\mathfrak{E}})$ and $(g,{\mathfrak{F}})$ satisfy condition $(\star)$. Then for all $\alpha\in A_k(G)$,
\begin{equation*}
 g^!_{\mathfrak{F}}f^!_{\mathfrak{E}}(\alpha)= f^!_{\mathfrak{E}}g^!_{\mathfrak{F}}(\alpha)
\end{equation*}
in $A_{k-d-e}(F^{\prime})$.
\end{theorem}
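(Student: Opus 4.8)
The plan is to prove commutativity by decomposing one of the two virtual pull-backs into the three elementary operations out of which it is built, and then commuting the other virtual pull-back past each of them. Concretely, write $g^!_{\mathfrak F}=s^*_{\mathfrak F}\circ j_*\circ\sigma_g$, where $\sigma_g\colon A_*(G)\to A_*(C_{G'/G})$ is the specialization to the normal cone, $j\colon C_{G'/G}\hookrightarrow\mathfrak F$ is the given closed embedding, and $s^*_{\mathfrak F}\colon A_*(\mathfrak F)\to A_{*-e}(G')$ is the Gysin map of the vector bundle stack. By the compatibility statement (iii) of Theorem \ref{relations}, $f^!_{\mathfrak E}$ is defined on the Chow groups of all the stacks lying over $G'$ (via the base change $g^*\mathfrak E$), so it is meaningful to ask that $f^!_{\mathfrak E}$ commute separately with $\sigma_g$, with $j_*$ and with $s^*_{\mathfrak F}$; granting the three commutations, the theorem follows by composing them. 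Here the hypothesis that $F'$ and $G'$ admit stratifications by global quotients is exactly what makes Kresch's intersection theory, and in particular the maps $s^*$, available on all the stacks in question.

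The two \emph{formal} commutations are immediate from Theorem \ref{relations}. First, $f^!_{\mathfrak E}$ commutes with the proper push-forward $j_*$ by part (i). Second, $s^*_{\mathfrak F}$ is characterized as the inverse of the flat pull-back isomorphism $\pi^*$ along the bundle projection $\pi\colon\mathfrak F\to G'$ (Proposition 5.3.2 of \cite{K:03}), and $f^!_{\mathfrak E}$ commutes with flat pull-back by part (ii); hence it commutes with $s^*_{\mathfrak F}$ as well. Both reductions use only the functoriality already established and involve no new geometry.

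The main obstacle is to commute $f^!_{\mathfrak E}$ with the specialization $\sigma_g$. I would realize $\sigma_g$ geometrically through the deformation to the normal cone $M^\circ_{G'/G}\to\mathbb{A}^1$, whose generic fibre is $G'\times(\mathbb{A}^1\smallsetminus 0)$ and whose special fibre is $C_{G'/G}$, so that $\sigma_g$ is the composition of the flat pull-back to the total space with the Gysin map of the divisor $\{0\}\subset\mathbb{A}^1$. Base-changing this entire deformation along $f$ produces the deformation space of the base-changed morphism $g'\colon F'\to F$, and applying parts (i) and (ii) of Theorem \ref{relations} fibrewise over $\mathbb{A}^1$ moves $f^!_{\mathfrak E}$ past the flat pull-back and the divisor Gysin map. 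The delicate point, exactly as in the proof of commutativity of refined Gysin homomorphisms in \cite{f}, is that the base change does not respect the normal cones on the nose: one has only a closed embedding $C_{F'/F}\hookrightarrow C_{G'/G}\times_{G'}F'$, and not an equality, unless $f$ is flat. I would resolve this by passing to the double deformation space, degenerating to the normal cones of $f$ and of $g$ simultaneously so that both specializations are carried out on one and the same space; on the distinguished fibre over the origin of $\mathbb{A}^1\times\mathbb{A}^1$ both operations become Gysin maps for genuine vector bundle stacks, where they commute because the underlying bundle (Segre) operations commute. Tracing back through the two deformations then yields $g^!_{\mathfrak F}f^!_{\mathfrak E}(\alpha)=f^!_{\mathfrak E}g^!_{\mathfrak F}(\alpha)$ for every $\alpha\in A_k(G)$, which is the assertion.
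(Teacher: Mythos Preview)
The paper does not actually prove this theorem: it appears in Section~2.3 among the background results ``recalled'' from \cite{eu} and is stated there without proof, so there is no argument in the present paper to compare your proposal against.

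That said, your outline is the standard one and is essentially how the result is established in \cite{eu}, following Fulton's commutativity argument from \cite{f}, Chapter~6, transported to Kresch's framework for Artin stacks: decompose $g^!_{\mathfrak F}$ as $s^*_{\mathfrak F}\circ j_*\circ\sigma_g$, commute $f^!_{\mathfrak E}$ past $j_*$ and $s^*_{\mathfrak F}$ using parts (i) and (ii) of Theorem~\ref{relations}, and then handle the specialization step via the double deformation space so that on the central fibre both operations reduce to intersections with zero sections of vector bundle stacks, which commute. Your identification of the delicate point---that $C_{F'/F}\hookrightarrow C_{G'/G}\times_{G'}F'$ is only a closed embedding, not an isomorphism---and your proposed resolution via the double deformation are exactly right.
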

\begin{theorem}\label{bivar}
Let $F$ admit a stratification by global quotients, let $f:F\to G$ be a morphism and $\mathfrak{E}\to F$ be a rank-$n$ vector bundle stack on $F$ such that $(f,\mathfrak{E})$ satisfies  Condition ($\star$). Then $f_{\mathfrak{E}}^!$ defines a bivariant class in $A^{n}(X\to Y)$ in the sense of \cite{f}, Definition 17.1.
\end{theorem}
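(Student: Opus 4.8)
The plan is to verify the three axioms of \cite{f}, Definition~17.1 directly (here the morphism written ``$X\to Y$'' is our $f:F\to G$). Recall that a class in $A^{n}(F\to G)$ is a compatible collection of homomorphisms $A_{*}(G')\to A_{*-n}(F')$, one for each $g:G'\to G$ with $F'=F\times_{G}G'$, subject to three compatibilities: (C$_1$) with proper push-forward, (C$_2$) with flat pull-back, and (C$_3$) with intersection by Cartier divisors. First I would record that the Construction already furnishes such homomorphisms for every base change: by the Remark following Condition~($\star$), the cone $C_{F'/G'}$ embeds in the pulled-back bundle stack, so $(f',(f')^{*}\mathfrak{E})$ satisfies~($\star$) and $f^{!}_{\mathfrak{E}}:A_{*}(G')\to A_{*-n}(F')$ is defined. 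That these homomorphisms are mutually compatible under iterated base change — so that they really do form a single collection — is precisely the content of Theorem~\ref{relations}(iii). It then remains to check (C$_1$)--(C$_3$).

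Two of the three axioms require no new work: (C$_1$) is exactly the push-forward identity of Theorem~\ref{relations}(i), and (C$_2$) is exactly the flat pull-back identity of Theorem~\ref{relations}(ii). So the whole weight of the proof falls on (C$_3$), and the strategy there is to recognise intersection with a Cartier divisor as itself a virtual pull-back and then appeal to the Commutativity theorem above.

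Concretely, given a base change $g:G''\to G$, the pulled-back map $f'':F''\to G''$, and an effective Cartier divisor $i:D\hookrightarrow G''$, I would observe that $i$ is a regular embedding of codimension one whose normal cone is the normal line bundle $N=\mathcal{O}(D)|_{D}$, with $C_{D/G''}=N$ tautologically embedded in $N$; hence $(i,N)$ satisfies~($\star$), and by construction (deformation to the normal cone) the virtual pull-back $i^{!}_{N}$ agrees with the classical Gysin operator $D\cdot(-)$ of \cite{f}, \S2.3. Applying the Commutativity theorem with common base $G''$ to the pair $f''$ (with bundle stack $(f'')^{*}\mathfrak{E}$, which represents $f^{!}_{\mathfrak{E}}$ on this base by Theorem~\ref{relations}(iii)) and $i$ (with $N$) yields, for $\alpha\in A_{k}(G'')$,
\begin{equation*}
i^{!}_{N}f^{!}_{\mathfrak{E}}(\alpha)=f^{!}_{\mathfrak{E}}i^{!}_{N}(\alpha)\qquad\text{in } A_{k-n-1}(F''\times_{G''}D),
\end{equation*}
which is exactly $D\cdot(f^{!}_{\mathfrak{E}}\alpha)=f^{!}_{\mathfrak{E}}(D\cdot\alpha)$. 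The passage from effective Cartier divisors to arbitrary pseudo-divisors is then the standard reduction of \cite{f}, \S2.3.

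The step I expect to be the main obstacle is precisely this input to (C$_3$): on the one hand one must carefully match the classical divisor-intersection with a virtual pull-back of exactly the shape covered by the Commutativity theorem, and on the other hand one must ensure that the fibre products arising as common bases (and their pull-backs) satisfy the running hypotheses of that theorem — DM-type morphisms and stratifications by global quotients. The latter should follow from the standing assumption that $F$ admits a stratification by global quotients together with the stability of this property under the base changes in play, but it is the part of the argument that demands genuine bookkeeping rather than a formal citation. Everything else reduces to the already-established functoriality of $f^{!}_{\mathfrak{E}}$.
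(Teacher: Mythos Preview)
The paper does not contain a proof of this theorem: Section~2.3 is explicitly a recollection of results from \cite{eu}, and Theorem~\ref{bivar} is stated there without argument, as background. So there is nothing in this paper to compare your proposal against.

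That said, your outline is the natural one and is essentially how such statements are established: axioms (C$_1$) and (C$_2$) are precisely Theorem~\ref{relations}(i),(ii), and (C$_3$) is obtained by identifying the Gysin map of a regular embedding with a virtual pull-back and invoking the Commutativity theorem. One small point: Fulton's (C$_3$) in Definition~17.1 is stated for refined Gysin homomorphisms of regular embeddings of arbitrary codimension, not just for Cartier divisors, so you should either verify (C$_3$) directly for an arbitrary regular embedding $i$ (its normal bundle again gives a pair satisfying ($\star$), and Commutativity applies verbatim), or cite the reduction in \cite{f} from arbitrary regular embeddings to divisors. Your remark about checking the hypotheses of the Commutativity theorem (DM-type, stratification by global quotients) is well taken and is indeed the only place where any real care is needed.
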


\begin{definition}\label{compatib}
 Let $F\stackrel{f}\rightarrow G\stackrel{g}\rightarrow \mathfrak{M}$ be DM-type morphisms of stacks. If we are given a distinguished triangle of relative obstruction theories which are perfect in $[-1,0]$ $$g^*E^{\bullet}_{G/\mathfrak{M}}\stackrel{\varphi}\rightarrow E^{\bullet}_{F/\mathfrak{M}}\to E^{\bullet}_{F/G}\to g^*E^{\bullet}_{G/\mathfrak{M}}[1]$$ with a morphism to the distinguished triangle $$g^*L_{G/\mathfrak{M}}\to L_{F/\mathfrak{M}}\to L_{F/G}\to g^*L_{G/\mathfrak{M}}[1],$$ then we call $(E^{\bullet}_{F/G},E^{\bullet}_{G/\mathfrak{M}},E^{\bullet}_{F/\mathfrak{M}})$ a compatible triple.
\end{definition}
\begin{theorem}\label{functoriality}
(Functoriality)
 Consider the composition
\begin{equation*}
 \xymatrix {F\ar[r]^f & G\ar[r]^g&\mathfrak{M}.}
\end{equation*}
Let us assume $f$, $g$ and $g\circ f$ have perfect relative obstruction theories $E^{\bullet}_{F/G}$, $E^{\bullet}_{G/\mathfrak{M}}$ and $E^{\bullet}_{F/\mathfrak{M}}$ respectively and let us denote the associated vector bundle stacks by $\mathfrak{E}_{F/G}$, $\mathfrak{E}_{G/\mathfrak{M}}$ and $\mathfrak{E}_{F/\mathfrak{M}}$ respectively. If $(E^{\bullet}_{F/G}, E^{\bullet}_{G/\mathfrak{M}},E^{\bullet}_{F/\mathfrak{M}})$ is a compatible triple, then for any $\alpha\in A_k(\mathfrak{M})$ $$(g\circ f)_{\mathfrak{E_{F/\mathfrak{M}}}}^!(\alpha)=f^!_{\mathfrak{E}_{F/G}}(g^!_{\mathfrak{E}_{G/\mathfrak{M}}}(\alpha)).$$
\end{theorem}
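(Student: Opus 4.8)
The plan is to unwind both sides of the asserted identity into their defining compositions of specialization and Gysin maps, and then to compare the resulting normal cones by a double deformation to the normal cone. Recall that a virtual pull-back $f^!_{\mathfrak{E}}$ is built as $s^*\circ i_*\circ\sigma$, where $\sigma$ specializes a cycle to its intrinsic normal cone, $i_*$ pushes that cone forward into the vector bundle stack $\mathfrak{E}$, and $s^*$ is the Gysin map inverting the bundle projection. Hence the left-hand side $(g\circ f)^!_{\mathfrak{E}_{F/\mathfrak{M}}}$ specializes $\alpha$ once, to $C_{F/\mathfrak{M}}$, while the right-hand side $f^!_{\mathfrak{E}_{F/G}}\circ g^!_{\mathfrak{E}_{G/\mathfrak{M}}}$ specializes twice, building an iterated cone out of $C_{G/\mathfrak{M}}$ and $C_{F/G}$. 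The whole problem is therefore to identify the single cone $C_{F/\mathfrak{M}}$ with this iterated cone, and to check that the two Gysin steps compose to the single Gysin step.

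First I would record the geometric content of the compatible-triple hypothesis. Dualizing the distinguished triangle $g^*E^{\bullet}_{G/\mathfrak{M}}\to E^{\bullet}_{F/\mathfrak{M}}\to E^{\bullet}_{F/G}\to g^*E^{\bullet}_{G/\mathfrak{M}}[1]$ and passing to the associated vector bundle stacks yields a short exact sequence of bundle stacks
\[
0\to\mathfrak{E}_{F/G}\to\mathfrak{E}_{F/\mathfrak{M}}\to f^*\mathfrak{E}_{G/\mathfrak{M}}\to 0,
\]
modelling the classical normal-bundle sequence $0\to N_{F/G}\to N_{F/\mathfrak{M}}\to f^*N_{G/\mathfrak{M}}\to 0$. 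The accompanying morphism of triangles down to the cotangent complexes is exactly what guarantees that the three embeddings $C_{F/G}\hookrightarrow\mathfrak{E}_{F/G}$, $C_{G/\mathfrak{M}}\hookrightarrow\mathfrak{E}_{G/\mathfrak{M}}$ and $C_{F/\mathfrak{M}}\hookrightarrow\mathfrak{E}_{F/\mathfrak{M}}$ are mutually compatible, i.e. fit inside this short exact sequence rather than being arbitrarily misaligned.

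The main step is a cone-compatibility lemma: inside $\mathfrak{E}_{F/\mathfrak{M}}$, the intrinsic normal cone $\mathfrak{C}_{F/\mathfrak{M}}$ is the total space of a cone fibered over $f^*\mathfrak{C}_{G/\mathfrak{M}}$ with fiber cone $\mathfrak{C}_{F/G}$; equivalently, specializing the cycle $C_{F/G}$ along the family $f^*C_{G/\mathfrak{M}}$ recovers $C_{F/\mathfrak{M}}$ as a cycle. I would prove this using the double deformation space, degenerating $F\to G$ and $G\to\mathfrak{M}$ simultaneously to obtain a family over $\mathbb{A}^2$ whose coordinate degenerations realize each of the cones, and then comparing the two orders of degeneration. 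Granting the lemma, the functoriality of Gysin maps for the short exact sequence above (Fulton's bundle compatibility, extended to vector bundle stacks) factors $s^*_{F/\mathfrak{M}}$ as $s^*_{F/G}\circ s^*_{G/\mathfrak{M}}$, and combining this with the cone identification gives the claimed equality on the generating cycles of $A_k(\mathfrak{M})$, hence on all of $A_k(\mathfrak{M})$.

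The main obstacle is precisely the cone-compatibility lemma. Unlike vector bundles, cones do not assemble into short exact sequences, so the identification of $C_{F/\mathfrak{M}}$ with the iterated cone cannot be read off from the sequence of Theorem \ref{bivar}'s bundle stacks; it must be carried out cycle-by-cycle on the intrinsic normal cone via the deformation space, and the delicate point is tracking multiplicities and verifying that the two degenerations in the double deformation produce literally the same cycle. The compatible-triple hypothesis is indispensable at exactly the moment one passes from this cone identification to the statement about Gysin maps, since it is what aligns the two obstruction-theoretic embeddings. A secondary technical nuisance is that $\mathfrak{M}$ may be a genuine Artin stack, so throughout I would work with Kresch's Chow groups and use the bivariance in Theorem \ref{bivar} together with the pull-back and push-forward relations of Theorem \ref{relations} to ensure every operation is defined and commutes with the flat pull-backs used to set up the deformation.
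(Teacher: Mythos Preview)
This theorem is not proved in the present paper: it appears in Section~2 as one of the background results ``recall[ed] \dots\ in \cite{eu}'' and is stated without proof. The proof lives in the author's earlier paper \cite{eu}, and the present paper only \emph{uses} an intermediate identity from that proof (in the argument for Lemma~\ref{vpf}).

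That said, your outline is essentially the approach taken in \cite{eu}: unwind both sides as $s^*\circ i_*\circ\sigma$, use the compatible triple to obtain the short exact sequence of vector bundle stacks $\mathfrak{E}_{F/G}\to\mathfrak{E}_{F/\mathfrak{M}}\to f^*\mathfrak{E}_{G/\mathfrak{M}}$, and reduce everything to a comparison of normal cones established via a double deformation space. So your strategy is correct and matches the original.

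One caution on wording. Your formulation of the cone lemma --- that $\mathfrak{C}_{F/\mathfrak{M}}$ ``is the total space of a cone fibered over $f^*\mathfrak{C}_{G/\mathfrak{M}}$ with fiber cone $\mathfrak{C}_{F/G}$'' --- is stronger than what is actually true or needed, as you yourself note in the final paragraph. What the double deformation over $\mathbb{P}^1$ (or $\mathbb{A}^2$) gives is a \emph{rational equivalence} of cycle classes inside $\mathfrak{E}_{F/\mathfrak{M}}$ between $[C_{F/\mathfrak{M}}]$ and the class of the iterated cone (roughly $[C_{f^*C_{G/\mathfrak{M}}\times_G F/f^*C_{G/\mathfrak{M}}}]$), not a literal identification of cone stacks. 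That rational equivalence is exactly enough once you apply $s^*$, and is what \cite{eu} proves. If you write up the argument, state the lemma at the level of cycle classes rather than as a fibration of cones.
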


\section{Virtual pull-backs and algebraic equivalences}
Let $T$ be an irreducible smooth variety of positive dimension $m$. The notation $t:\{t\}\to T$ will be used to denote the inclusion of a closed point $t$ in $T$. If $p:\mathcal{F}\to T$ is given, then we denote by $F_t$ the stack $p^{-1}(t)$. Any $k+m$-cycle on $\mathcal{F}$ determines a family of $k$-cycle classes $\alpha_t\in A_k(Y_t)$, by the formula
\begin{equation*}
\alpha_t:=t^!\alpha
\end{equation*} 
If $f:\mathcal{F}\to\mathcal{G}$ is a morphism of stacks over $T$, we denote by
\begin{equation*}
f_t:F_t\to G_t
\end{equation*}
the induced morphism on the fibers over $t\in T$. 
\begin{proposition}
Let $f:\mathcal{F}\to\mathcal{G}$ be a morphism of stacks over $T$. If $E\to\mathcal{F}$ is a vector bundle-stack which satisfies condition ($\star$) for $f$, then $E_t$ satisfies condition  ($\star$) for $f_t$ and we have that
\begin{equation*}
 f_t^!(\alpha_t)=(f^!(\alpha))_t
\end{equation*}
in $A_*(f^{-1}(\alpha))$.
\begin{proof}
 The statement follows by the commutativity of virtual pull-backs.
\end{proof}
\end{proposition}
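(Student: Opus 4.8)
The plan is to reduce the identity to the commutativity of virtual pull-backs stated above, once the specialization map $t^!$ is itself recognized as a virtual pull-back.

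First I would settle condition $(\star)$ for $f_t$. Since $T$ is smooth of dimension $m$, the inclusion $t\colon\{t\}\to T$ is a regular embedding of codimension $m$, and base-change along the structure maps $\mathcal{G}\to T$, $\mathcal{F}\to T$ yields a Cartesian square
\begin{equation*}
\xymatrix{F_t\ar[d]\ar[r]^{f_t}&G_t\ar[d]^{g}\\ \mathcal{F}\ar[r]^{f}&\mathcal{G},}
\end{equation*}
in which $g\colon G_t\hookrightarrow\mathcal{G}$ is a regular embedding (indeed $\mathcal{F}\times_{\mathcal{G}}G_t=\mathcal{F}\times_T\{t\}=F_t$). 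Applying the base-change remark preceding the construction of $f^!_{\mathfrak{E}}$ to this square, the closed embedding $C_{\mathcal{F}/\mathcal{G}}\hookrightarrow E$ induces a closed embedding $C_{F_t/G_t}\hookrightarrow E_t$, so $E_t$ satisfies $(\star)$ for $f_t$.

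Next I would realize $t^!$ as a virtual pull-back in order to invoke commutativity. The regular embedding $g$ has normal bundle $\mathfrak{F}=N_{G_t/\mathcal{G}}$, a genuine vector bundle of rank $m$; since $C_{G_t/\mathcal{G}}=\mathfrak{F}$, the pair $(g,\mathfrak{F})$ satisfies $(\star)$ tautologically, and—because $g$ is a regular embedding—the construction of $g^!_{\mathfrak{F}}$ (specialization to the normal cone, followed by the Gysin inverse of the zero section) is exactly Fulton's refined Gysin map $t^!$, both over $\mathcal{G}$ and in its refined form over $\mathcal{F}$. Attaching $\mathfrak{E}=E$ to the bottom arrow $f$, both the bottom and the right arrows of the square now carry vector bundle stacks satisfying $(\star)$, so the commutativity theorem (using that $F_t,G_t$ admit stratifications by global quotients and that $f,g$ are of DM-type) yields, for every $\alpha\in A_*(\mathcal{G})$,
\begin{equation*}
g^!_{\mathfrak{F}}\,f^!_{E}(\alpha)=f^!_{E}\,g^!_{\mathfrak{F}}(\alpha)
\end{equation*}
in $A_*(F_t)$.

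It then remains to identify the two sides. On the left, $f^!_E(\alpha)\in A_*(\mathcal{F})$ and the refined $g^!_{\mathfrak{F}}$ over $\mathcal{F}$ is $t^!$, so the left-hand side is $t^!(f^!\alpha)=(f^!\alpha)_t$. On the right, $g^!_{\mathfrak{F}}(\alpha)=t^!\alpha=\alpha_t\in A_*(G_t)$, while by the compatibility property of Theorem \ref{relations}(iii) the refined $f^!_E$ over $G_t$ agrees with the virtual pull-back $f_t^!$ computed with $g^*E=E_t$; hence the right-hand side is $f_t^!(\alpha_t)$. Comparing the two expressions gives $f_t^!(\alpha_t)=(f^!\alpha)_t$. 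The only genuinely delicate point is the identification of the normal-bundle virtual pull-back with the classical Gysin map $t^!$ that defines $\alpha_t$, together with verifying the ancillary DM-type and global-quotient hypotheses of the commutativity theorem in the family over $T$; once these are in place the equality is a formal consequence of commutativity and of the compatibility of refined virtual pull-backs under base change.
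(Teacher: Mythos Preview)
Your expansion is exactly what the paper's one-line proof intends: recognize $t^!$ as a virtual pull-back and invoke commutativity. One small slip to fix: you claim $g\colon G_t\hookrightarrow\mathcal{G}$ is itself a regular embedding with normal bundle $N_{G_t/\mathcal{G}}$, but base change of a regular embedding along an arbitrary map need not be regular, so $N_{G_t/\mathcal{G}}$ may not be a vector bundle. What you actually need (and what the same base-change remark you already cited gives) is that $(g,\mathfrak{F})$ satisfies $(\star)$ with $\mathfrak{F}$ the pull-back to $G_t$ of $N_{\{t\}/T}$; the resulting $g^!_{\mathfrak{F}}$ is then by construction Fulton's refined $t^!$, and the rest of your argument goes through verbatim.
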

\begin{proposition}
Let $f:F\to G$ be a morphism of stacks and let $E\to F$ is a vector bundle-stack which satisfies condition ($\star$) for $f$. Then we have a morphism $f_E^!:B_*(G)\to B_{*-r}(F)$ which makes the diagram commute
\begin{equation*}
 \xymatrix{A_*(G)\ar[r]^{f_E^!}\ar[d]_{cl_G}&A_{*-r}(F)\ar[d]^{cl_F}\\
B_*(G)\ar[r]^{f_E^!}&B_{*-r}(F)}
\end{equation*}
\begin{proof}
 This follows from the previous proposition. Let us sketch the proof. We have to show that for any cycle $\alpha\in A_*(G)$ such that $cl_G\alpha=0$ we have that $cl_Ff^!\alpha=0$. Let $\alpha\in A_*(G)$ as above. By the definition of algebraic equivalence, there exists a non-singular variety $T$ of dimension $m$ and $k+m$-dimensional subvarieties $\mathcal{V}_i$ of  $T\times G$, flat over $T$ and points $t_1,\ t_2\in T$ such that
 \begin{equation*}
 \alpha=\sum_{i=1}^r[(V_i)_{t_1}]-[(V_i)_{t_2}].
 \end{equation*}
 By the above proposition we have that
 \begin{equation*}
 f_{t_j}^!(\sum_{i=1}^r[\mathcal{V}_i]_{t_j})=(f^!\sum_{i=1}^r[\mathcal{V}_i])_{t_j}
\end{equation*}
for $j=1,\ 2$. This shows that $$f^!\alpha=(f^!\sum_{i=1}^r[\mathcal{V}_i])_{t_1}-(f^!\sum_{i=1}^r[\mathcal{V}_i])_{t_2}.$$
Let us now analyze the right-hand side. Let $\mathcal{W}^{\prime}_i$ be cycles representing $f^![\mathcal{V}_i]$. As the pull back via $t_j$ is not influenced by components of $\mathcal{W}^{\prime}_i$ which do not map dominantly to $T$ we may discard them. Let us call the resulting stacks by $\mathcal{W}_i$. This shows that   $$f^!\alpha=(\sum_{i=1}^r[\mathcal{W}_i])_{t_1}-(\sum_{i=1}^r[\mathcal{W}_i])_{t_2}$$ and therefore $f^!\alpha$ is algebraically equivalent to zero.
\end{proof}

\begin{remark}
As $H_0(G)=B_0(G)$ the above morphism $f_E^!:B_*(G)\to B_{*-r}(F)$ induces a morphism which makes the diagram commute
\begin{equation*}
 \xymatrix{A_0(G)\ar[r]^{f_E^!}\ar[d]&A_{0-r}(F)\ar[d]\\
H_0(G)\ar[r]^{f_E^!}&H_{0-r}(F)}.
\end{equation*}
\end{remark}
\begin{remark} The definition of virtual pull-backs $i:X\to Y$ related to the cartesian diagram
\begin{equation*}
\xymatrix{X\ar[r]^i\ar[d]_q&Y\ar[d]\\
X^{\prime}\ar[r]&Y^{\prime}}
\end{equation*}
with $X^{\prime}\to Y^{\prime}$ a regular embedding and the obstruction bundle $E_{X/Y}:=q^*N_{X^{\prime}/Y^{\prime}}$ gives rise to a pull-back in homology $i_{E_{X/Y}}^!:H_*(Y)\to H_*(X)$ (see \cite{f}, Chapter 19).  We could not construct a similar morphism $$i_{E_{X/Y}}^!: H_*(Y)\to H_*(X)$$ in general.
\end{remark}

\end{proposition}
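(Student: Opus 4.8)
The plan is to deduce the whole statement from the preceding proposition, which identifies the fibrewise virtual pull-backs with the restrictions of a single global one. The bottom arrow of the square is forced: the only thing that has to be proved is that $f_E^!$ sends cycles algebraically equivalent to zero to cycles algebraically equivalent to zero. Granting this, $f_E^!$ descends to a map $B_*(G)\to B_{*-r}(F)$, and the commutativity $cl_F\circ f_E^!=f_E^!\circ cl_G$ holds by the very definition of the induced arrow. So the entire content is the implication: if $cl_G\alpha=0$, then $cl_F f_E^!\alpha=0$.

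To prove this I would first unwind the definition of algebraic equivalence. If $cl_G\alpha=0$, then there are a smooth irreducible variety $T$ of dimension $m$, subvarieties $\mathcal{V}_i\subset T\times G$ flat over $T$, and two closed points $t_1,t_2\in T$ with
\[
\alpha=\sum_i\bigl([(\mathcal{V}_i)_{t_1}]-[(\mathcal{V}_i)_{t_2}]\bigr),\qquad (\mathcal{V}_i)_t:=t^![\mathcal{V}_i].
\]
I then spread the situation out over $T$: I apply the previous proposition to the base-changed morphism $\mathrm{id}_T\times f:T\times F\to T\times G$ and the pulled-back bundle stack, which still satisfies condition $(\star)$.

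The previous proposition gives $f_t^!(\beta_t)=(f^!\beta)_t$; taking $\beta=\sum_i[\mathcal{V}_i]$ and using that the family is constant (so each $f_{t_j}^!$ is just $f_E^!$) together with linearity yields
\[
f^!\alpha=\Bigl(f^!\sum_i[\mathcal{V}_i]\Bigr)_{t_1}-\Bigl(f^!\sum_i[\mathcal{V}_i]\Bigr)_{t_2}.
\]
Thus $f^!\alpha$ is the difference of the two fibres over $t_1,t_2$ of the single class $f^!\sum_i[\mathcal{V}_i]\in A_{*-r}(T\times F)$. This step is purely formal, resting on the compatibility of the virtual pull-back with the regular-embedding Gysin maps $t^!$ supplied by the commutativity theorem.

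The one delicate point, which I expect to be the main obstacle, is the passage from \emph{``difference of two fibres of a global class''} to \emph{``algebraically equivalent to zero,''} since the latter demands a family that is \emph{flat} (equivalently dominant) over the base. I would handle it by first reducing to the case where $T$ is a smooth curve: a general complete-intersection curve through $t_1$ and $t_2$ carries the flat family with the same two fibres, and the displayed identity is unchanged. Over a smooth curve, any cycle representing $f^!\sum_i[\mathcal{V}_i]$ splits into components dominating $T$ — which are automatically flat over $T$ and restrict to honest fibres — and components contained in single fibres; the latter are annihilated by $t_1^!,t_2^!$ because the normal bundle of a point in a curve is trivial, so the self-intersection vanishes. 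Discarding these vertical components therefore changes nothing, leaving cycles $\mathcal{W}_i$ flat over $T$ with
\[
f^!\alpha=\Bigl(\sum_i[\mathcal{W}_i]\Bigr)_{t_1}-\Bigl(\sum_i[\mathcal{W}_i]\Bigr)_{t_2},
\]
which now literally exhibits $f^!\alpha$ as algebraically equivalent to zero, completing the argument.
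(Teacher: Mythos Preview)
Your proposal is correct and follows the same strategy as the paper: reduce to showing $f_E^!$ preserves algebraic equivalence, spread $\alpha$ out as fibres of flat families $\mathcal{V}_i$ over a smooth base $T$, apply the previous proposition to interchange $f^!$ with the fibre restrictions $t_j^!$, and then discard the components of $f^![\mathcal{V}_i]$ that do not dominate $T$.

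The one point where you go beyond the paper is your preliminary reduction to a smooth \emph{curve} $T$ via a general complete-intersection curve through $t_1,t_2$. The paper works over an arbitrary smooth $T$ and simply asserts that ``the pull back via $t_j$ is not influenced by components of $\mathcal{W}'_i$ which do not map dominantly to $T$'' and that the remaining $\mathcal{W}_i$ witness algebraic equivalence. Over a higher-dimensional base neither claim is immediate: a non-dominant component could still meet the fibre over $t_j$ nontrivially, and a dominant component need not be flat. Your curve reduction cleanly handles both issues---non-dominant components then lie in single fibres and are killed by the excess-intersection computation with the trivial normal bundle $N_{t_j/T}$, while dominant components over a smooth curve are automatically flat. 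So your version is in fact a tightening of the paper's sketch rather than a departure from it.
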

\section{Virtual push-forwards}\label{pfrd}
In this section we consider a proper surjective morphism $f:F\to G$ of stacks which possess perfect obstruction theories and we analyze the push-forward of the virtual class of $F$ along $f$. The strongest statement can only be obtained in \emph{algebraic equivalence} and therefore in this section we will work with homology groups instead of Chow groups. The main result of this section states that if the virtual dimension of $F$ is greater or equal to the virtual dimension of $G$ and the induced relative obstruction theory is perfect, then the push-forward of the virtual class of $F$ along $f$ is equal to a scalar multiple of the virtual class of $G$. This result is a generalization of the straight-forward fact that given a surjective morphism of schemes $f:F\to G$, with $G$ irreducible, then $f_*[F]$ is a scalar multiple of the fundamental class of $G$.
\\
\\Let us first formalize these ideas.
\begin{definition} Let $p : F\to G$ be a proper morphism of stacks possessing virtual classes $[F]^{\vv}\in A_{k_1}(F)$ and $[G]^{\vv}\in A_{k_2}(G)$ with $k_1\geq k_2$ and let $[G]^{\vv}_1,...,[G]^{\vv}_s\in A_{k_2}(G)$ be irreducible cycles such that $[G]^{\vv}=[G]^{\vv}_1+...+[G]^{\vv}_s$. Let $\gamma\in A^{k_3}(F)$, with $k_3\leq k_1-k_2$ be a cohomology class. We say that $p$ satisfies the virtual pushforward
property for $[F]^{\vv}$ and $[G]^{\vv}$ if the following two conditions hold:
\\(i) If the dimension of the cycle $\gamma \cdot[F]^{\vv}$ is bigger than the virtual dimension of
$G$ then $p_*(\gamma \cdot[F]^{\vv}) = 0$.
\\(ii) If the dimension of the cycle $\gamma\cdot [F]^{\vv}$ is equal to the virtual dimension of $G$
then $p_*(\gamma\cdot[F]^{\vv})=n_1[G]^{\vv}_1+...+n_s[G]^{\vv}_s$ for some $n_1,...,n_s\in\Q$.
\\We say the $p$ satisfies  the \emph{strong} virtual push-forward property if moreover, the following condition holds
\\($ii^{\prime}$) If the dimension of the cycle $\gamma\cdot [F]^{\vv}$ is equal to the virtual dimension of $G$
then $p_*(\gamma\cdot[F]^{\vv})$ is a scalar multiple of $[G]^{\vv}$.
\end{definition}
\begin{remark} The definition of ``push-forward property" first appears in Gathmann's work \cite{a}, with a minor difference. Gathmann says that a morphism satisfies the push-forward property if in our language it satisfies the strong virtual push-forward property. We prefer this terminology mainly because we would like to say that smooth morphisms satisfy the virtual push-forward property. 
\end{remark}
\begin{remark} Let $p : F\to G$ be a morphism as above. If $G$ is smooth of the expected dimension, then $p$ satisfies the virtual pushforward property. If $G$ is also irreducible, then $p$ satisfies  the strong virtual push-forward property.
\end{remark}

\begin{lemma}\label{vpf} Let $p:F\to G$ be a proper morphism of stacks possessing virtual classes of virtual dimensions $k_1$ respectively $k_2$ with $k_1\geq k_2$. If we have a compatible triple $(E^{\bullet}_{F/G},E^{\bullet}_{G},E^{\bullet}_{F})$, such that the relative obstruction theory $E^{\bullet}_{F/G}$ is perfect, then $p$ satisfies the virtual push-forward property.
\end{lemma}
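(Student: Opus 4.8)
The plan is to reduce everything to the functoriality of virtual pull-backs and a single dimension count on the target. The key observation is that the compatible triple lets one express the virtual class of $F$ as a virtual pull-back of the virtual class of $G$. Let $g\colon G\to\mathfrak{M}$ be the structure morphism to the smooth Artin stack $\mathfrak{M}$ relative to which the obstruction theories $E^{\bullet}_{G}=E^{\bullet}_{G/\mathfrak{M}}$ and $E^{\bullet}_{F}=E^{\bullet}_{F/\mathfrak{M}}$ are taken, so that $[G]^{\vv}=g^!_{\mathfrak{E}_{G/\mathfrak{M}}}[\mathfrak{M}]$ and $[F]^{\vv}=(g\circ p)^!_{\mathfrak{E}_{F/\mathfrak{M}}}[\mathfrak{M}]$. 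Since $(E^{\bullet}_{F/G},E^{\bullet}_{G},E^{\bullet}_{F})$ is a compatible triple in the sense of Definition~\ref{compatib} and $E^{\bullet}_{F/G}$ is perfect, Theorem~\ref{functoriality} applies and yields
\begin{equation*}
[F]^{\vv}=(g\circ p)^!_{\mathfrak{E}_{F/\mathfrak{M}}}[\mathfrak{M}]=p^!_{\mathfrak{E}_{F/G}}\big(g^!_{\mathfrak{E}_{G/\mathfrak{M}}}[\mathfrak{M}]\big)=p^!_{\mathfrak{E}_{F/G}}[G]^{\vv}.
\end{equation*}
This identity is the backbone of the argument, and all hypotheses of the lemma feed into it.

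Next I would decompose the target class and keep track of supports. Write $[G]^{\vv}=\sum_{i=1}^{s}[G]^{\vv}_{i}$ with $[G]^{\vv}_{i}=[V_i]$ for irreducible subvarieties $V_i\subseteq G$ of dimension $k_2$, and form the Cartesian squares with $F_i:=p^{-1}(V_i)$, closed immersions $j_i\colon F_i\hookrightarrow F$ and $\iota_i\colon V_i\hookrightarrow G$, and induced map $p_i\colon F_i\to V_i$. Because $p^!_{\mathfrak{E}_{F/G}}$ is a bivariant class (Theorem~\ref{bivar}) and commutes with proper push-forward (Theorem~\ref{relations}(i), combined with the compatibility (iii)), its value on $[V_i]=(\iota_i)_*[V_i]$ refines to a class supported on the preimage:
\begin{equation*}
p^!_{\mathfrak{E}_{F/G}}[V_i]=(j_i)_*\beta_i,\qquad \beta_i:=p_i^![V_i]\in A_{k_1}(F_i).
\end{equation*}
Summing over $i$ exhibits $[F]^{\vv}=\sum_i (j_i)_*\beta_i$ as a sum of classes supported on the $F_i$.

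I would then cap with $\gamma$ and push forward. By the projection formula for the bivariant class $\gamma\in A^{k_3}(F)$ and the properness of each $j_i$,
\begin{equation*}
\gamma\cdot[F]^{\vv}=\sum_{i=1}^{s}(j_i)_*\big(j_i^*\gamma\cap\beta_i\big),
\end{equation*}
and since $p\circ j_i=\iota_i\circ p_i$ this gives
\begin{equation*}
p_*\big(\gamma\cdot[F]^{\vv}\big)=\sum_{i=1}^{s}(\iota_i)_*\,(p_i)_*\big(j_i^*\gamma\cap\beta_i\big).
\end{equation*}
Each summand $(p_i)_*(j_i^*\gamma\cap\beta_i)$ lies in $A_{k_1-k_3}(V_i)$, i.e.\ is a cycle of dimension $k_1-k_3$ supported on the irreducible $k_2$-dimensional variety $V_i$. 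A dimension count now finishes both cases: if $k_1-k_3>k_2$ then $A_{k_1-k_3}(V_i)=0$, so $p_*(\gamma\cdot[F]^{\vv})=0$, which is (i); and if $k_1-k_3=k_2$ then $A_{k_2}(V_i)=\Q\cdot[V_i]$, so each summand equals $n_i[V_i]$ for some $n_i\in\Q$, whence $p_*(\gamma\cdot[F]^{\vv})=\sum_i n_i[G]^{\vv}_i$, which is (ii).

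The part I expect to require the most care is not the final numerology but the two structural inputs: confirming that the hypotheses of Theorem~\ref{functoriality} are genuinely met (a true compatible triple with $E^{\bullet}_{F/G}$ perfect) so that $[F]^{\vv}=p^!_{\mathfrak{E}_{F/G}}[G]^{\vv}$, and justifying that $p^!_{\mathfrak{E}_{F/G}}[V_i]$ really does refine to a class supported on $p^{-1}(V_i)$, since this localization onto the $k_2$-dimensional strata is exactly what makes the dimension vanishing bite. I would also stress that this argument establishes only the (non-strong) virtual push-forward property; upgrading conclusion (ii) to the assertion that $p_*(\gamma\cdot[F]^{\vv})$ is a scalar multiple of $[G]^{\vv}$ itself requires passing to homology and invoking algebraic equivalence together with the connectedness of $G$, and I would treat that separately.
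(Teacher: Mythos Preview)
Your argument is correct and rests on the same core idea as the paper: localize to the irreducible components supporting $[G]^{\vv}$ and finish with a dimension count on each $V_i$. The execution differs in packaging. The paper unpacks the proof of Theorem~\ref{functoriality} and works explicitly inside the vector bundle stack $\mathfrak{E}_G$: it replaces $[G]^{\vv}$ by a geometric representative $G^{\mathrm{v}}$, forms the cone $C_{F'/G^{\mathrm{v}}}$, and tracks the class $[C']\in A_*(q^*\mathfrak{E}_G|_{G^{\mathrm{v}}})$ under the projection $r$ to $\mathfrak{E}_G|_{G^{\mathrm{v}}}$, so that the push-forward visibly lands among the components of $\mathfrak{E}_G|_{G^{\mathrm{v}}}$. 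You instead invoke Theorem~\ref{functoriality} as a black box to obtain $[F]^{\vv}=p^!_{\mathfrak{E}_{F/G}}[G]^{\vv}$, and then use the bivariant properties of $p^!_{\mathfrak{E}_{F/G}}$ (Theorem~\ref{relations}(i),(iii) and Theorem~\ref{bivar}) to refine each $p^![V_i]$ to a class on $p^{-1}(V_i)$. Your route is cleaner and makes the logical dependencies more transparent; the paper's route has the mild advantage that the cone-level identity $p_*\gamma\cdot[F]^{\vv}=r_*\pi^*\gamma\cdot[C']$ is reused verbatim in the proof of Theorem~\ref{connected}, where one needs to compare the coefficients $n_i$ across components. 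Your closing remark that the strong version requires passing to homology and connectedness is exactly right and matches how the paper proceeds.
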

\begin{proof}
The proof is very similar to Lai's arguments (see pages 9-11 in \cite{h}). 
\\Let $\mathfrak{E}_F:=h^1/h^0(E^{\bullet}_{F})$, $\mathfrak{E}_G:=h^1/h^0(E^{\bullet}_{G})$ and $\mathfrak{E}_{F/G}:=h^1/h^0(E^{\bullet}_{F/G})$. Let $0_{F}:F\to
\mathfrak{E}_F$, $0_{G}:F\to\mathfrak{E}_G$ and $0_{F/G}:F\to\mathfrak{E}_{F/G}$ be the zero-section embeddings. Then by the definition of the virtual class we have that $[G]^{\vv}=0_G^!C_{G}$ and $[F]^{\vv}=0_F^!C_{F}$. Let us denote by $G^{\mathrm{v}}$ any closed substack of $G$ such that $[G]^{\vv}=[G^{\mathrm{v}}]$ in $A_*(G)$. With this notation we have that 
\begin{equation}\label{notatie}
[G]^{\vv}=0_G^![\mathfrak{E}_{G}|_{G^{\mathrm{v}}}].
\end{equation} Let us now consider the following cartesian diagram
\begin{equation*}
 \xymatrix{F^{\prime}\ar[d]_i\ar[r]^q&G^{\mathrm{v}}\ar[d]\\F\ar[r]^p&G.}
\end{equation*}
By the proof of Theorem \ref{functoriality} we have that
\begin{align*}
[F]^{\vv}&=0_{F/G}^![C_{F^{\prime}/G^{\mathrm{v}}}]\\&=0_{F/G}^!0_{G}^![C_{F^{\prime}/G^{\mathrm{v}}}\times_{F^{\prime}}q^*\mathfrak{E}_{G}|_{G^{\mathrm{v}}}]\\&=0^!_{G}0_{F/G}^![C_{F^{\prime}/G^{\mathrm{v}}}\times_{F^{\prime}}q^*\mathfrak{E}_{G}|_{G^{\mathrm{v}}}]
\end{align*}
Let us denote $[C^{\prime}]:=0_{F/G}^![C_{F^{\prime}/G^{\mathrm{v}}}\times_{F^{\prime}}q^*\mathfrak{E}_{G}|_{G^{\mathrm{v}}}]\in A_*(q^*\mathfrak{E}_{G}|_{G^{\mathrm{v}}})$. Then the above computation shows that $$\gamma\cdot[F^{\vv}]=0^!_{G}\pi^*\gamma\cdot[C^{\prime}]$$ where $\pi:q^*\mathfrak{E}_{G}\to F$ denotes the canonical projection. By the commutativity of the pull-back with proper (projective) push-forward in the following cartesian diagram
\begin{equation*}
 \xymatrix{F^{\prime}\ar[r]\ar[d]&i^*p^*\mathfrak{E}_G\ar[d]\ar@/^1.5pc/[dd]^r\\F\ar[r]\ar[d]_p&p^*\mathfrak{E}_G\ar[d]\\G\ar[r]&\mathfrak{E}_G}
\end{equation*}
we obtain that
\begin{equation}\label{con}
p_*\gamma\cdot[F]^{\vv}=r_*\pi^*\gamma\cdot[C^{\prime}].
\end{equation}
By construction $C^{\prime}$ has a natural map to $\mathfrak{E}_{G}|_{G^{\mathrm{v}}}$ compatible with $r$ and therefore $p_*\gamma\cdot[F]^{\vv}=\sum n_i[\mathfrak{E}_{G}|_{G^{\mathrm{v}}}]_i$, where the sum is taken over all the irreducible components of $\mathfrak{E}_{G}|_{G^{\mathrm{v}}}$. We can now conclude the proof using equation \ref{notatie}.
\\If $k_3< k_1-k_2$, then $r_*\pi^*\gamma\cdot[C^{\prime}]=0$ for dimensional reasons and therefore $p_*\gamma\cdot[F]^{\vv}=0$.
\end{proof}
\begin{definition}\label{virtflat}
Let $p:F\to G$ be a surjective, proper morphism of stacks possessing virtual classes of virtual dimensions $k_1$ respectively $k_2$ with $k_1\geq k_2$. If we have a compatible triple $(E^{\bullet}_{F/G},E^{\bullet}_{G},E^{\bullet}_{F})$, such that the relative obstruction theory $E^{\bullet}_{F/G}$ is perfect, then we call $p$ a virtually smooth morphism.
\end{definition}
\begin{remark}
 This definition is very similar to Definition 3.14  in \cite{gf} of \emph{a family of proper virtually smooth schemes}. The main difference is that we do not ask the base $G$ to be smooth.
\end{remark}

\begin{theorem}\label{connected}
 Let $p:F\to G$ be a virtually smooth morphism. If $G$ is connected, then $p$ satisfies the strong virtual push-forward property.
\end{theorem}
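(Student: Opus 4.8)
The plan is to bootstrap from Lemma \ref{vpf}. A virtually smooth morphism in the sense of Definition \ref{virtflat} is in particular a proper morphism equipped with a compatible triple whose relative obstruction theory $E^{\bullet}_{F/G}$ is perfect, so Lemma \ref{vpf} applies and already yields conditions (i) and (ii) of the virtual push-forward property. Hence the only thing left is to promote (ii) to (ii$'$): starting from the identity
\begin{equation*}
p_*(\gamma\cdot[F]^{\vv})=n_1[G]^{\vv}_1+\dots+n_s[G]^{\vv}_s
\end{equation*}
produced by Lemma \ref{vpf} in the equidimensional case, I must show that the coefficients all coincide, $n_1=\dots=n_s=:n$, for then $p_*(\gamma\cdot[F]^{\vv})=n([G]^{\vv}_1+\dots+[G]^{\vv}_s)=n[G]^{\vv}$. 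This is exactly the point at which the connectedness of $G$ and the passage to homology become unavoidable: in the Chow group the individual multiplicities may genuinely differ, and only up to algebraic equivalence do they become equal.

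Next I would reinterpret each $n_i$ as a fibrewise virtual degree. In case (ii) the cohomology degree is $k_3=k_1-k_2$, so for a general closed point $t$ of the support $G^{\vv}_i$ the cycle $\gamma_t\cdot[F_t]^{\vv}$ is zero-dimensional, where $[F_t]^{\vv}$ is the virtual class attached to the fibre $F_t=p^{-1}(t)$ by restricting the perfect relative obstruction theory $E^{\bullet}_{F/G}$. Using an auxiliary smooth curve $T\to G$ through $t$ to define the Gysin map $t^{!}$, the first Proposition of Section 3 (compatibility of virtual pull-back with restriction to fibres, $f^{!}_t(\alpha_t)=(f^{!}\alpha)_t$) identifies $t^{!}[F]^{\vv}$ with $[F_t]^{\vv}$, and the compatibility of $t^{!}$ with the proper push-forward $p_*$ from Theorem \ref{relations}(i) gives
\begin{equation*}
t^{!}\bigl(p_*(\gamma\cdot[F]^{\vv})\bigr)=\bigl(\deg(\gamma_t\cdot[F_t]^{\vv})\bigr)\,[t].
\end{equation*}
Since $t$ is a general point of $G^{\vv}_i$, the left-hand side equals $n_i[t]$, so $n_i=\deg(\gamma_t\cdot[F_t]^{\vv})$ is the virtual degree of the fibre over $t$.

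Finally I would show that this virtual degree is independent of $i$, which is where connectedness of $G$ enters. The second Proposition of Section 3 shows that the virtual pull-back descends to algebraic equivalence, so for a connected smooth parameter variety $T\to G$ the fibre classes over two points $t_1,t_2\in T$ are algebraically equivalent; capping with $\gamma$ and taking degrees (which factors through $B_0=H_0$) then forces $\deg(\gamma_{t_1}\cdot[F_{t_1}]^{\vv})=\deg(\gamma_{t_2}\cdot[F_{t_2}]^{\vv})$. Because $G$ is connected, any two general points of $G^{\vv}_i$ and $G^{\vv}_j$ can be joined by a chain of irreducible curves in $G$, and applying the invariance along each link shows that the virtual degree is constant, whence $n_i=n_j$ for all $i,j$. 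I expect the main obstacle to be precisely this last step: making the deformation invariance rigorous for points lying on the possibly singular and reducible stack $G$—in particular realising $t^{!}[F]^{\vv}=[F_t]^{\vv}$ through auxiliary smooth families and propagating the equality of degrees across the intersections of distinct components—rather than the now routine cone computation underlying Lemma \ref{vpf}.
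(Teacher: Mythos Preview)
Your proposal is correct and follows essentially the same approach as the paper: bootstrap from Lemma \ref{vpf}, identify each $n_i$ as the virtual degree of $\gamma$ on the fibre class over a general point of $G^{\mathrm{v}}_i$, and then use connectedness of $G$ (equivalently $H_0(G)=\Q$, via the descent of virtual pull-backs to algebraic equivalence established in Section 3) to conclude that this fibrewise degree is independent of the point. The only cosmetic difference is that the paper first reduces explicitly to the case where $G^{\mathrm{v}}$ is reduced, so that a general point is smooth and the inclusion $i:P\hookrightarrow G^{\mathrm{v}}$ is itself a regular embedding, whereas you reach the same Gysin map through an auxiliary smooth curve.
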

\begin{proof}
In notations as above, we have by the above lemma that $$\gamma\cdot[F]^{\vv}=n_1[G^{\mathrm{v}}_1]+...+n_s[G^{\mathrm{v}}_s]$$ for some $n_1,...,n_s\in\Q$. Here $G^{\mathrm{v}}_1,...,G^{\mathrm{v}}_s$ are taken to be irreducible and such that $[G]^{\vv}=[G^{\mathrm{v}}_1]+...+[G^{\mathrm{v}}_s]$. We are left to show that all the $n_i$'s are equal.
\\Let $m_1,...,m_s$ be the geometric multiplicity of $G^{\mathrm{v}}_1,...,G^{\mathrm{v}}_s$. Then $[G]^{\vv}=m_1[G^{\mathrm{r}}_1]+...+m_s[G^{\mathrm{r}}_s]$, where $G^{\mathrm{r}}_i$ is the reduced stack associated to $G^{\mathrm{v}}_i$ and therefore $[C^{\prime}]=\sum_{i=1}^sm_i0_{F/G}^![C^{\prime}_i]$, where $C^{\prime}_i:=C_{F^{\prime}_i/G_i^{\mathrm{r}}}\times_{F^{\prime}_i}q^*\mathfrak{E}_{G}|_{G^{\mathrm{r}}_i}$. By equation (\ref{con}) we have that $p_*\gamma\cdot[F]^{\vv}=r_*\pi^*\gamma\cdot(\sum_{i=1}^sm_i[C_i^{\prime}])$. With this we have shown that it is enough to show the statement for $G$ reduced.
\\Let us consider the cartesian diagram
\begin{equation*}
 \xymatrix{X_P\ar[r]^{j}\ar[d]_{q_P}&F^{\prime}\ar[d]^q\\P\ar[r]^{i}&G^{\mathrm{v}}}
\end{equation*}
where $P$ is a general point in $G^{\mathrm{v}}$ and $X_P$ is the fiber of $q$ over $P$. As $G^{\mathrm{v}}$ is reduced we may assume that $P$ is a smooth point and therefore $i$ is a regular embedding. By the commutativity of pull-backs with proper push-forwards we have that
\begin{equation}
 (q_P)_*i^!\gamma\cdot[F]^{\vv}=i^*q_*\gamma\cdot[F]^{\vv}.
\end{equation}
This shows that $(q_P)_*i^!\gamma\cdot[F]^{\vv}=i^*\sum_in_i[G^{\mathrm{v}}_i]$. Without loss of generality we may assume that $P$ is a point on $G^{\mathrm{v}}_1$ and with this we obtain that
\begin{equation}\label{nr}
(q_P)_*i^!\gamma\cdot[F]^{\vv}=n_1[P].
\end{equation}
On the other hand by the commutativity of pull-backs we have that
\begin{align}\label{co}
 i^!q^![G]^{\vv}&=q_P^!i^*[G^{\mathrm{v}}]\\&=q_P^![P].
\end{align}
By the functoriality property of pull-backs we have that
\begin{equation}\label{fu}
 i^!q^![G^{\mathrm{v}}]=i^![F]^{\vv}.
\end{equation}
Equations (\ref{nr}), (\ref{co}) and (\ref{fu}) imply that
\begin{equation*}
 n_1[P]=(q_P)_*\gamma\cdot q_P^![P].
\end{equation*}
As $G$ is connected the right-hand side of the above equation does not depend on $P$, hence $p$ satisfies the push-forward property.
\end{proof}
\begin{remark}
 The only point where we need to work with homology is the last part of the proof of the above theorem. For any connected $G$ we have that $H_0(G)=\Q$, but this is usually no longer true for the corresponding Chow group.
\end{remark}

\begin{remark}\label{induced}
 Let us consider a cartesian diagram of stacks
\begin{equation*}
 \xymatrix{F^{\prime}\ar[r]\ar[d]_q&F\ar[d]^p\\
G^{\prime}\ar[r]^i&G.}
\end{equation*}
If $F$, $G$, $G^{\prime}$ posses virtual classes and the relative obstruction theory $E^{\bullet}_{F/G}$ is perfect, then we have an induced virtual class on $F^{\prime}$, namely $[F^{\prime}]^{\vv}:=q^![G^{\prime}]^{\vv}$.
\end{remark}

\begin{corollary}\label{basech}
 Let us consider a cartesian diagram of stacks
\begin{equation*}
 \xymatrix{F^{\prime}\ar[r]\ar[d]_q&F\ar[d]^p\\
G^{\prime}\ar[r]^i&G}
\end{equation*}
such that $p$ is proper and $F$, $G$, $G^{\prime}$ posses virtual classes. If the relative obstruction theory $E^{\bullet}_{F/G}$ is perfect, $G$ is connected, then $q$ satisfies the strong virtual push-forward property for $[F^{\prime}]^{\vv}$ and $[G^{\prime}]^{\vv}$, where $[F^{\prime}]^{\vv}$ is the one defined in Remark \ref{induced}.
\end{corollary}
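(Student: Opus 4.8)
The plan is to reduce the statement to Theorem \ref{connected} by showing that the base-changed morphism $q\colon F'\to G'$ is itself virtually smooth in the sense of Definition \ref{virtflat}. Once $q$ is known to be proper, surjective, to carry virtual classes of relative dimension $\ge 0$, and to fit into a compatible triple whose relative term is perfect, Theorem \ref{connected} applies and delivers the strong virtual push-forward property. Thus essentially all of the work lies in transporting the hypotheses on $p$ across the Cartesian square.

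First I would dispose of the formal points. Properness is stable under base change, so $q$ is proper, and likewise $q$ is surjective whenever $p$ is. The relative virtual dimension is a base-change invariant, so the virtual dimensions $k_1',k_2'$ of $F'$ and $G'$ satisfy $k_1'-k_2'=k_1-k_2\ge 0$. By Remark \ref{induced} the induced class is $[F']^{\vv}=q^![G']^{\vv}$, where $q^!$ is the virtual pull-back attached to the relative obstruction theory of $q$; this is exactly the class that Theorem \ref{connected} requires, and its compatibility with the virtual pull-backs already in play is guaranteed by Theorem \ref{relations}(iii).

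The mathematical heart of the argument is the base change of the compatible triple. Writing $j\colon F'\to F$ for the top horizontal arrow, I would put $E^{\bullet}_{F'/G'}:=Lj^*E^{\bullet}_{F/G}$ and check that it is again a perfect relative obstruction theory for $q$. Perfect amplitude in $[-1,0]$ survives the pull-back of a two-term complex of locally free sheaves, and the structure map to $L_{F'/G'}$ is produced by composing $j^*$ of the map $E^{\bullet}_{F/G}\to L_{F/G}$ with the base-change comparison $j^*L_{F/G}\to L_{F'/G'}$. Pulling back the distinguished triangle of Definition \ref{compatib} along $j$, and using $i^*$ on the $G$-side, should yield a compatible triple $(E^{\bullet}_{F'/G'},E^{\bullet}_{G'},E^{\bullet}_{F'})$, with $E^{\bullet}_{F'}$ the cone of $q^*E^{\bullet}_{G'}\to E^{\bullet}_{F'/G'}$. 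I expect this to be the main obstacle: one has to know that the pulled-back triangle is still an obstruction-theory triangle (that $h^0$ stays an isomorphism and $h^{-1}$ stays surjective after restriction) and that it maps compatibly to the triangle of cotangent complexes, so the verification is really about the functoriality of the obstruction-theory formalism under base change rather than a one-line citation.

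With $q$ recognised as a virtually smooth morphism, the conclusion is immediate from Theorem \ref{connected}. It is worth recording where connectedness enters: in the proof of Theorem \ref{connected} the push-forward coefficient at a general point $P'$ of the support of $[G']^{\vv}$ is computed on the fibre of $q$ over $P'$, and connectedness forces these coefficients to coincide through the identification $H_0=\Q$. Since the square is Cartesian, the fibre of $q$ over $P'$ is the fibre of $p$ over $i(P')$, so the same computation runs without change; the connectedness of the base of $q$ is precisely what collapses the scalars $n_1,\dots,n_s$ to a single $n$, giving $q_*(\gamma'\cdot[F']^{\vv})=n\,[G']^{\vv}$ and hence the strong virtual push-forward property.
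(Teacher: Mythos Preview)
Your plan to apply Theorem \ref{connected} directly to $q$ has a genuine gap: that theorem requires the base to be connected, and here the base of $q$ is $G'$, which is \emph{not} assumed connected --- only $G$ is. In your final paragraph you write ``the connectedness of the base of $q$ is precisely what collapses the scalars'', but the base of $q$ is $G'$, so this hypothesis is simply unavailable. You do notice that the fibre of $q$ over $P'\in G'$ coincides with the fibre of $p$ over $i(P')\in G$; that observation is exactly the right one, but it means you cannot cite Theorem \ref{connected} as a black box --- you must re-run its fibrewise argument and use the connectedness of $G$, not of $G'$.

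This is what the paper does. It first invokes (the argument of) Lemma \ref{vpf} for $q$ to get $q_*(\gamma\cdot[F']^{\vv})=\sum_i n_i[G']^{\vv}_i$; note that no compatible triple over $G'$ is needed here, because $[F']^{\vv}$ is \emph{defined} as $q^![G']^{\vv}$ in Remark \ref{induced}, which is all the proof of Lemma \ref{vpf} actually uses. Then, to show all $n_i$ agree, the paper applies Theorem \ref{connected} to $p$ (where connectedness of $G$ is available) to get a single constant $n$ with $(p_P)_*(\gamma\cdot[X_P]^{\vv})=n[P]$ for every $P\in G$. Since the fibre of $q$ over a point $P'\in G'$ equals the fibre of $p$ over $i(P')$, restricting to a generic (hence smooth, after reducing) point of each component of the support of $[G']^{\vv}$ gives $n_i=n$ for all $i$.

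A secondary issue: your attempt to manufacture a compatible triple $(E^{\bullet}_{F'/G'},E^{\bullet}_{G'},E^{\bullet}_{F'})$ is not needed and is also not obviously available. The obstruction theory $E^{\bullet}_{G'}$ is given independently and need not be $i^*E^{\bullet}_{G}$, so there is no reason the pulled-back triangle should mesh with it, and the cone you propose for $E^{\bullet}_{F'}$ has no a priori reason to be an obstruction theory. The paper avoids this entirely by working with the definition $[F']^{\vv}=q^![G']^{\vv}$ directly.
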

\begin{proof}
By Lemma \ref{vpf} we have that $p_*(\gamma\cdot[F^{\prime}]^{\vv})=n_1[G^{\prime}]^{\vv}_1+...+n_s[G^{\prime}]^{\vv}_s$ for some $n_1,...,n_s\in\Q$. We have to show that all $n_i$'s are equal.
\\As is the proof of the theorem we may assume that $G$ and $G^{\prime}$ are reduced.
\\Let us consider the following cartesian diagram
 \begin{equation*}
 \xymatrix{X\ar[r]\ar[d]_{q_P}&F^{\prime}\ar[r]\ar[d]_q&F\ar[d]^p\\
P\ar[r]&G^{\prime}\ar[r]^i&G}
\end{equation*}
where $P$ is any closed point. Let $\gamma\in A^*(F)$ as in \ref{connected}. Then, by Theorem \ref{connected}, we have that $$p_*\gamma\cdot[F]^{\vv}=n[G]^{\vv}$$ for some $n\in\Q$. Also, by the proof of \ref{connected}, we have that $$(q_P)_*\gamma\cdot [X]^{\vv}=n[P].$$ Looking now at the diagram on the left, and assuming that $P$ is a smooth point of $G^{\prime}$ we obtain the following by Theorem \ref{relations}
\begin{equation*}
 q_*(\gamma\cdot[F^{\prime}]^{\vv})=(q_P)_*\gamma\cdot [X]^{\vv}.
\end{equation*}
As $G^{\prime}$ is reduced, we have that the generic point is smooth and hence the above equation holds for a dense open subset of $G^{\prime}$. Combining this equation with the previous, we obtain that $q_*(\gamma\cdot[F^{\prime}]^{\vv})=n[G^{\prime}]^{\vv}$.
\end{proof}
\section{Applications}
\subsection[Conservation of number]{Conservation of number for virtually smooth morphisms}
Let us recall Fulton's principle of conservation of number (see Proposition 10.2 \cite{f}).
\begin{proposition}Let $f:F\to G$ be a proper morphism, $G$ an $m$-dimensional irreducible scheme. Let $i_P:P\to G$ be a point in $G$ and $\alpha$ be an $m$-dimensional cycle on $F$. Then the cycle classes $\alpha_P:=i_P^*\alpha$ have the same degree.
\end{proposition}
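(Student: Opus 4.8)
The plan is to collapse the a priori $P$-dependence of $\deg(\alpha_P)$ into a single rational number read off from $f_*\alpha$, and then to check that this number does not involve $P$ at all. First I would push $\alpha$ forward along the proper morphism $f$. Since $\alpha$ is an $m$-cycle and $f$ is proper, $f_*\alpha$ is an $m$-dimensional cycle on $G$; because $G$ is irreducible of dimension $m$, the only $m$-dimensional subvariety of $G$ is $G$ itself, so $Z_m(G)=\Q\cdot[G]$ and hence $f_*\alpha=n[G]$ for a unique $n\in\Q$.

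Next I would set up, for each point $P$ lying in the smooth locus of $G$, the fibre square
\begin{equation*}
\xymatrix{
F_P\ar[r]^{g}\ar[d]_{f_P}&F\ar[d]^f\\
P\ar[r]^{i_P}&G}
\end{equation*}
where $F_P=f^{-1}(P)$ and $f_P$ is the induced proper morphism. Whenever $P$ is a smooth point of $G$, the inclusion $i_P$ is a regular embedding of codimension $m$, so the refined Gysin map $i_P^{!}$ is defined and agrees with $i_P^{*}$. The central computation is the chain $\deg(\alpha_P)=\deg(i_P^{!}\alpha)=\deg\bigl((f_P)_*i_P^{!}\alpha\bigr)=\deg\bigl(i_P^{!}f_*\alpha\bigr)$, where the second equality uses that proper push-forward to a point preserves the degree of a zero-cycle, and the third is the compatibility of refined Gysin maps with proper push-forward (\cite{f}, Theorem 6.2). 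Substituting $f_*\alpha=n[G]$ then gives $\deg(\alpha_P)=n\cdot\deg\bigl(i_P^{!}[G]\bigr)$.

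To finish I would evaluate $\deg\bigl(i_P^{!}[G]\bigr)$. Since $P$ is a smooth point, the normal cone $C_{P/G}$ is the normal space $N_{P/G}$, a rank-$m$ vector space over a point, and the refined self-intersection of the fundamental cycle returns $[P]$ with multiplicity one, so $\deg\bigl(i_P^{!}[G]\bigr)=1$. Hence $\deg(\alpha_P)=n$ at every point where the Gysin map is defined. As the smooth locus is dense in the irreducible $G$ and the value $n$ is manifestly the same for all such points, all the classes $\alpha_P$ share the common degree $n$.

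The main obstacle, and the only place where irreducibility is genuinely used, is the identification $f_*\alpha=n[G]$: this is exactly what converts the family of degrees into one scalar. The compatibility of $i_P^{!}$ with proper push-forward is the technical engine, but it is available off the shelf; the delicate point is ensuring that $i_P$ is a regular embedding, which forces the restriction to the (dense) smooth locus of $G$ and is the reason the statement is phrased for points of an irreducible — hence generically smooth — scheme.
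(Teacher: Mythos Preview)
Your argument is correct and is precisely the classical proof. Note, however, that the paper does not supply its own proof of this proposition: it is quoted as Fulton's conservation of number principle (\cite{f}, Proposition~10.2) and stated without proof, serving only as motivation for the virtual analogue that follows. Your push-forward-and-restrict argument --- writing $f_*\alpha=n[G]$ by irreducibility, then using compatibility of the refined Gysin map with proper push-forward to get $\deg(\alpha_P)=\deg(i_P^{!}f_*\alpha)=n$ --- is exactly Fulton's own proof, so there is no alternative approach in the paper to compare against. Your care about restricting to smooth points (so that $i_P$ is a regular embedding) is appropriate; Fulton in fact states the result for a nonsingular base, and the paper's phrasing with ``irreducible'' should be read in that spirit.
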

In this section we will give a version of this principle in the situation when $f:F\to G$ is a virtually smooth morphisms.
\\As a consequence of the conservation of number principle we give a proof of the fact that the virtual Euler characteristic is constant in virtually smooth families (see Definition \ref{virtflat}). This statement is a generalization of Proposition 4.14 in \cite{gf} of Fantechi and G\"ottsche.
\\As in the section on virtual push-forwards we work with \emph{homology} rather than with Chow groups.
\\
\\Let us now state the conservation of number principle for virtually smooth morphisms.
\begin{proposition}\label{ctnr}
 Let $G$ be a connected stack of pure dimension and let $f:F\to G$ be a proper virtually smooth morphism of stacks (see Definition \ref{virtflat}) of virtual relative dimension $d$. Let $i:P\to X$ be a point in $X$ and let us consider $\alpha\in A^d(F)$. Then, the number $$i^*\alpha\cdot [X_P]^{\vv}$$ is constant. 
\end{proposition}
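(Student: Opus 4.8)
The plan is to reduce the statement to the constancy of a single rational number attached to the fibres, and then to extract that constancy from the fact that $H_0(G)=\Q$ for a connected $G$, exactly as in the last step of the proof of Theorem \ref{connected}. First I would record that, because $f$ is virtually smooth, the perfect relative obstruction theory $E^{\bullet}_{F/G}$ produces a virtual pull-back $f^!_{\mathfrak{E}}$ for the vector bundle stack $\mathfrak{E}=\mathfrak{E}_{F/G}$, and that for the point $i:P\to G$ the induced class is $[F_P]^{\vv}=f^!_{\mathfrak{E}}[P]\in A_d(F_P)$. This identity is valid for \emph{every} point $P$, smooth or not, since $f^!_{\mathfrak{E}}$ is defined by the cone construction rather than by a Gysin map; it is the content of the compatibility statement in Theorem \ref{relations}(iii) together with Remark \ref{induced}. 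Thus the quantity in the statement is the degree $m_P:=\deg\!\big(i^*\alpha\cdot f^!_{\mathfrak{E}}[P]\big)$ of a genuine zero-cycle on $F_P$.

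Next I would package this into a dimension-preserving operation on cycles on $G$. Since $\alpha\in A^d(F)$ and $f^!_{\mathfrak{E}}$ are both bivariant classes (Theorem \ref{bivar}), the assignment
\begin{equation*}
c(\beta):=f_*\big(\alpha\cdot f^!_{\mathfrak{E}}\beta\big)
\end{equation*}
sends $A_j(G)$ to $A_j(G)$; evaluating on $\beta=[P]$ gives $c([P])=m_P[P]$, because $\alpha\cdot f^!_{\mathfrak{E}}[P]$ is a zero-cycle supported on the fibre $F_P$ and $f$ maps $F_P$ to the point $P$. The crucial observation is then that each of the three operations composing $c$ — the virtual pull-back $f^!_{\mathfrak{E}}$, the cap product with the bivariant class $\alpha$, and the proper push-forward $f_*$ — descends to Borel--Moore homology; for $f^!_{\mathfrak{E}}$ this is precisely the construction of the section on virtual pull-backs and algebraic equivalences (where it is established at the level of $H_0$), while the other two are standard. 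Hence $c$ induces an endomorphism of $H_0(G)$.

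Finally, connectedness of $G$ gives $H_0(G)=\Q$, so this induced endomorphism is multiplication by a single scalar $n$, and the class of every point $P$ is the generator. Comparing $c([P])=m_P[P]$ with the action on $H_0(G)$ shows $m_P=n$ independently of $P$, which is the assertion; one checks along the way that this is the same constant appearing in the strong virtual push-forward property of Theorem \ref{connected}, by evaluating $c$ on $[G]^{\vv}$ at a smooth point.

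I expect the main obstacle to be the treatment of arbitrary, possibly singular or non-reduced, points $P$: there the embedding $i$ is not regular and no Gysin pull-back is available, so one cannot argue in the Chow group, where $A_0(G)$ is typically far larger than $\Q$ and two points need not be rationally equivalent. Passing to Borel--Moore homology, where $H_0(G)=\Q$ forces all point classes to coincide, is exactly what circumvents this difficulty, and verifying that the composite operation $c$ is well-defined at the level of $H_0$ — rather than merely on the Chow groups — is the technical heart of the argument.
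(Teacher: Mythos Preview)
Your argument is correct and follows essentially the same route as the paper: both rest on the fact that the virtual pull-back, cap product with $\alpha$, and proper push-forward all descend to homology, so that the assignment $P\mapsto m_P[P]$ factors through $H_0(G)=\Q$. The only organizational difference is that the paper first invokes Theorem \ref{connected} to name the constant $n$ and then uses the projection formula to identify $m_P$ with $n$ via the $P$-independence of $j_*[X_P]^{\vv}$, whereas you package the same computation directly as an endomorphism $c$ of $H_0(G)$; the underlying ingredients are identical.
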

\begin{proof}
Let $P$ be any point of $G$ and let us consider the following cartesian diagram
\begin{equation*}
 \xymatrix{X_P\ar[d]_g\ar[r]^j&F\ar[d]^f\\P\ar[r]^i&G}
\end{equation*}
where $X_P$ is the fiber of $X$ over $P$ and $g:X_P\to P$ is the map induced by $f$. By Theorem \ref{connected} we have that
\begin{equation}
\alpha[F]^{\vv}=n[G].
\end{equation}
Let us show that $n$ is equal to $i^*\alpha\cdot[X_P]^{\vv}$ for any $P$. For this, we see that
\begin{align*}
 i_*g_*i^*\alpha\cdot[X_P]^{\vv}&=f_*(j_*(j^*\alpha)\cdot[X_P]^{\vv})\\&=f_*(\alpha\cdot j_*[X_P]^{\vv}).
\end{align*}
As the $G$ is connected it follows that $j_*[X_P]^{\vv}$ does not depend on the point $P$ and therefore the intersection product $i^*\alpha\cdot[X_P]^{\vv}$ is equal to $n$ for any $P$.
\end{proof}

\begin{remark}
Taking $G$ to be smooth we obtain the conservation of number principle in families of virtually smooth schemes (see definition 3.14 in \cite{gf})  which is Corollary 3.16 in \cite{gf}.

\end{remark}

\subsection{Virtual Euler characteristics in virtually smooth families}

\begin{definition}Let $f:F\to G$ be a morphism of proper stacks with a 1-perfect obstruction theory $E_{F/G}$ which admits a global resolution of $E_{F/G}$ as a complex of vector bundles $[E^{−1}\to E^0]$ (e.g. if $F$ can be embedded as closed substack in a separated stack which is smooth over $G$.)
We denote by $[E_0\to E_1]$ the dual complex and by $d$ the expected dimension $d :=
rkE_{F/G} = rk E^0-rk E^{−1}$. We denote the class $[E_0]-[E_1]\in K^0(F)$ by $T^{\vv}_{F/G}$ and we call it the virtual relative tangent of $f$.
\end{definition}
\begin{definition}Let $f:F\to G$ be a morphism of stacks as before.
 We define the relative virtual Euler characteristic of $f$ to be the top
virtual Chern number $e^{\vv}(F/G) := c_d(T^{\vv}_{F/G})$.
\end{definition}
\begin{remark}
 The definition is coherent with Definition 4.2 in \cite{gf} by Corollary 4.8 ((Hopf index theorem) in \cite{gf}.
\end{remark}
\begin{proposition}\label{constant}
 Let $G$ be a connected stack of pure dimension and let $f:F\to G$ be a morphism of stacks with $E_{F/G}$ a perfect obstruction theory for $f$. Then, all the fibers of $f$ have the same virtual Euler characteristic. 
\end{proposition}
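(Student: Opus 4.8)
The plan is to deduce this statement directly from the conservation of number principle, Proposition \ref{ctnr}, by feeding it the distinguished cohomology class given by the top virtual Chern class of the relative tangent. Concretely, I would set
$$\alpha := e^{\vv}(F/G) = c_d(T^{\vv}_{F/G}) \in A^d(F),$$
where $d$ is the virtual relative dimension of $f$. For a closed point $i:P\to G$ with fibre $j:X_P\to F$ and structure map $g:X_P\to P$, Proposition \ref{ctnr} asserts that the number $i^*\alpha\cdot[X_P]^{\vv}$ is independent of $P$. Thus the entire proof reduces to identifying this number with the virtual Euler characteristic of the fibre $X_P$.

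The heart of the argument is therefore the base-change compatibility of the relative obstruction theory. Since $P$ is a point and hence smooth, the cotangent complex of $X_P$ agrees with its cotangent complex relative to $P$, and the relative perfect obstruction theory restricts: $j^*E_{F/G}$ is an (absolute) perfect obstruction theory for $X_P$, fitting into the compatible triangle relating $L_{X_P}$, the pullback of $L_F$, and $L_{X_P/P}$. From this I extract the two identities I need. First, by Remark \ref{induced} the induced virtual class of the fibre is $[X_P]^{\vv}=g^![P]$, which is precisely the class entering Proposition \ref{ctnr}. Second, the virtual relative tangent restricts as $T^{\vv}_{X_P}=j^*T^{\vv}_{F/G}$ in $K^0(X_P)$. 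Taking top Chern classes and noting that $T^{\vv}_{X_P}$ has rank $d$ gives $c_d(T^{\vv}_{X_P})=j^*c_d(T^{\vv}_{F/G})=i^*\alpha$, so that
$$e^{\vv}(X_P)=\deg\!\big(c_d(T^{\vv}_{X_P})\cap[X_P]^{\vv}\big)=i^*\alpha\cdot[X_P]^{\vv}.$$

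It then remains only to check that the hypotheses of Proposition \ref{ctnr} hold, i.e. that $f$ is a proper virtually smooth morphism in the sense of Definition \ref{virtflat}. Properness is part of the standing assumptions on the stacks in this subsection, and $E_{F/G}$ is perfect by hypothesis; what is still required is a compatible triple $(E_{F/G},E_G,E_F)$. Such a triple is available, for instance, whenever $G$ is smooth of pure dimension, in which case $E_G=L_G$ and the absolute theory $E_F$ is produced from the functoriality triangle of Theorem \ref{functoriality}. With this in place, Proposition \ref{ctnr} shows $i^*\alpha\cdot[X_P]^{\vv}$ is constant, and by the identification above all fibres share the same virtual Euler characteristic.

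I expect the main obstacle to be the base-change step: establishing cleanly that $E_{F/G}$ pulls back to a perfect obstruction theory on the fibre $X_P$ computing exactly $e^{\vv}(X_P)$, and that the formation of $c_d(-)\cap(-)$ is compatible with the virtual pull-back $g^!$ that defines $[X_P]^{\vv}$. Once these functoriality statements are in hand, everything else is a formal consequence of Proposition \ref{ctnr}.
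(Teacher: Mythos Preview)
Your approach is exactly the paper's: the proof in the paper is the single line ``We use the above proposition with $\alpha:=c_d(T^{\vv}_{F/G})$,'' invoking Proposition~\ref{ctnr}. Your proposal unpacks this one line, supplying the identification $e^{\vv}(X_P)=i^*\alpha\cdot[X_P]^{\vv}$ via base-change of the obstruction theory and flagging the implicit hypotheses (properness, compatible triple) that the paper leaves to the ambient context of the subsection; these elaborations are reasonable and do not diverge from the paper's argument.
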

\begin{proof}
 We use the above proposition with $\alpha:=c_d(T^{\vv}_{F/G})$.
\end{proof}
\begin{remark}
Taking $G$ to be smooth we obtain that the virtual Euler characteristic is constant in a family of virtually smooth schemes. This is a different proof of Proposition 4.14 in \cite{gf}.
\end{remark}

\subsection{Virtual push-forward and Gromov-Witten invariants}\label{gw}
\paragraph{The standard obstruction theory for the moduli space of stable maps.} Let us fix notations. Let $X$ be a smooth projective variety and $\beta\in A_1(X)$ a homology class of a curve in $X$. We denote by $\overline{M}_{g,n}(X,\beta)$ the moduli space of stable genus-$g$, $n$-pointed maps to $X$ of homology class $\beta$. Let $$\epsilon_X: \overline{M}_{g,n}(X,\beta)\to \mathfrak{M}_{g,n}$$ be the morphism that forgets the map (and does not stabilize the pointed curve) and $$\pi_X: \overline{M}_{g,n+1}(X,\beta)\to \overline{M}_{g,n}(X,\beta)$$ the morphism that forgets the last marked point and stabilizes the result. Then it is a well-known fact that  $$E_{\overline{M}_{g,n}(X,\beta)/\mathfrak{M}}^{\bullet}:=(\mathcal{R}^{\bullet}(\pi_{X})_*ev_{X}^*T_{X})^{\vee}$$ defines an obstruction theory for the morphism $p$, where $ev_X$ indicates the evaluation map $ev_X:\overline{M}_{g,n+1}(X,\beta)\to X$ (see \cite{b}). We call $$[\overline{M}_{g,n}(X,\beta)]^{\vv}:=(\epsilon_X)_{\mathfrak{E}_{\overline{M}_{g,n}(X,\beta)/\mathfrak{M}}}^!\mathfrak{M}_{g,n}$$ the virtual class of $\overline{M}_{g,n}(X,\beta)$.
\begin{remark}\label{welldef}
Let $p: X\to Y$ be a morphism of smooth algebraic varieties. Let $\beta\in H_2(X)$ and $g,\ n$ be any natural numbers such that
\begin{itemize} 
\item either $g\geq 2$
 \item either $g< 2$ and $f_*\beta\neq0$
\item either $g=1$, $f_*\beta=0$ and $n\geq 1$, either $g=0$, $p_*\beta=0$ and $n\geq 3$.
\end{itemize}
Then $p$ induces a morphism of stacks 
\begin{align*}\bar{p}: \overline{M}_{g,n}(X,\beta)&\to \overline{M}_{g,n}(Y,p_*\beta)\\
(\tilde{C},x1,...,x_n,\tilde{f}&\mapsto(C,x_1,...,x_n,p\circ\tilde{f}))
\end{align*}
where $C$ is obtain by $\tilde{C}$ by contracting the unstable components of $f:=p\circ\tilde{f}$
\\Convention: Given a morphism of smooth algebraic varieties $f:X\to Y$, we will indicate the induced morphism between moduli spaces of stable maps by the same letter with a bar.
\\Convention: For simplicity, we will denote obstruction theories of a morphism $f:F\to G$ by $E_f$. For example, we will write $E_{\epsilon_X}$ instead of $E_{\overline{M}_{g,n}(X,\beta)/\mathfrak{M}}^{\bullet}$.
\\Convention: In the following, everytime we write $\bar{p}:\overline{M}_{g,n}(X,\beta)\to \overline{M}_{g,n}(Y,p_*\beta)$ we will assume that $\overline{M}_{g,n}(Y,p_*\beta)$ is non-empty.
\end{remark}
\begin{remark} The moduli space $\overline{M}_{g,n}(X,\beta)$ has a perfect dual \emph{absolute} obstruction theory
$$0\to\mathcal{T}^1\to(E_0)_X\to (E_1)_X\to \mathcal{T}^2\to0.$$ Let us fix a point $P:=(C,x_1,...,x_n)$ and let us denote by $T_P$ the restriction of $\mathcal{T}^1$ to $P$ and by $Ob_P$ the restriction of $\mathcal{T}^2$ to $P$. Then we have the following exact sequence
\begin{align*}
0\to Ext^0 (\Omega_C (D), \mathcal{O}_C )\to H^0 (C, f^*T_ X ) \to T_P\to \\
\to Ext^1 (\Omega_C (D), \mathcal{O}_C ) \to H^ 1 (C, f^*T _X ) \to Ob_P\to 0. \end{align*}
\end{remark}
\paragraph{Costello's construction.} For our purposes it is easier to use Costello's trick (\cite{c}). Let us shortly present how his construction applies to our case. In section 2 of \cite{c}, Costello introduces an artin stack $\mathfrak{M}_{g,n, \beta}$, where $\beta$ is an additional labeling of each irreducible components of a marked curve of genus $g$ by the elements of a semigroup. We will take this semigroup to be $H_2(X)$, for some smooth variety $X$. In \cite{c} it is shown that the forgetful map $\mathfrak{M}_{g,n, \beta}\to \mathfrak{M}_{g,n}$ is \'etale and that the natural forgetful map $\epsilon_X:\overline{M}_{g,n}(X,\beta)\to \mathfrak{M}_{g,n}$ factors through $\epsilon_{X,\beta}:\overline{M}_{g,n}(X,\beta)\to\mathfrak{M}_{g,n, \beta}$. Therefore, we can consider the perfect relative obstruction theory of $\overline{M}_{g,n}(X,\beta)$ $$\mathcal{R}^{\bullet}\pi_*f^*T_X\to \mathfrak{M}_{g,n, \beta}$$
which induces a virtual class $([\overline{M}_{g,n}(X,\beta)]^{\vv})^{\prime}:=\epsilon_{\beta}^![\overline{M}_{g,n}(X,\beta)]$. It can be easily seen that $$([\overline{M}_{g,n}(X,\beta)]^{\vv})^{\prime}=[\overline{M}_{g,n}(X,\beta)]^{\vv}.$$
This construction has the advantage that for a given map $p:X\to Y$ we have a commutative diagram
\begin{equation}
\xymatrix{\overline{M}_{g,n}(X,\beta)\ar[d]_{\bar{p}}\ar[r]^{\epsilon_{X,\beta}}&\mathfrak{M}_{g,n, \beta}\ar[d]^{\psi}\\
\overline{M}_{g,n}(Y,p_*\beta)\ar[r]^{\epsilon_{Y,p_*\beta}}&\mathfrak{M}_{g,n, p_*\beta}}
\end{equation}
where $\psi(C)$ contracts the unstable components $C_i$ such that the label $\beta_i$ satisfies $p_*\beta_i=0$ and changes the label on each irreducible component by $p_*\beta_i$.
\begin{proposition}
If $p:X\to Y$ is a smooth morphism, then the relative obstruction theory of $\bar{p}: \overline{M}_{g,n}(X,\beta)\to\overline{M}_{g,n}(Y,p_*\beta)$ is perfect.
\end{proposition}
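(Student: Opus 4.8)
The plan is to build the relative obstruction theory of $\bar{p}$ explicitly and to identify it with $(\mathcal{R}^{\bullet}\pi_*f^*T_{X/Y})^{\vee}$, where $f:\mathcal{C}\to X$ is the universal stable map over $A:=\overline{M}_{g,n}(X,\beta)$, $\pi:\mathcal{C}\to A$ is the universal curve, and $T_{X/Y}$ is the relative tangent sheaf of $p$. Since $p$ is smooth, $T_{X/Y}$ is locally free, and this will immediately yield perfection in $[-1,0]$. Throughout I would work inside Costello's diagram, writing $B:=\overline{M}_{g,n}(Y,p_*\beta)$, $\mathfrak{M}_X:=\mathfrak{M}_{g,n,\beta}$, $\mathfrak{M}_Y:=\mathfrak{M}_{g,n,p_*\beta}$, and using the factorisation $\epsilon_{Y,p_*\beta}\circ\bar{p}=\psi\circ\epsilon_{X,\beta}$.

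First I would record that, by Costello's construction \cite{c}, the map $\psi:\mathfrak{M}_X\to\mathfrak{M}_Y$ is \'etale; hence $L_{\mathfrak{M}_X/\mathfrak{M}_Y}=0$ and the perfect relative obstruction theory $E_{\epsilon_{X,\beta}}$ of $\epsilon_{X,\beta}$ also serves as a perfect relative obstruction theory for the composite $A\to\mathfrak{M}_Y$. Next I would exploit the smoothness of $p$ through the short exact sequence of locally free sheaves $0\to T_{X/Y}\to T_X\to p^*T_Y\to 0$ on $X$. Pulling this back along $f$ and applying $\mathcal{R}^{\bullet}\pi_*$ produces a distinguished triangle relating $\mathcal{R}^{\bullet}\pi_*f^*T_{X/Y}$, $\mathcal{R}^{\bullet}\pi_*f^*T_X$ (the dual of $E_{\epsilon_{X,\beta}}$) and $\mathcal{R}^{\bullet}\pi_*f^*p^*T_Y$.

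The central step is to identify $\mathcal{R}^{\bullet}\pi_*f^*p^*T_Y$ with the pullback $\bar{p}^*\mathcal{R}^{\bullet}\pi_*g^*T_Y$ of the corresponding complex over $B$, where $g$ is the universal map over $B$. For this I would use the contraction morphism $\nu:\mathcal{C}\to\bar{p}^*\mathcal{C}_B$ over $A$ through which $p\circ f$ factors: the components contracted by $\nu$ are exactly the genus-zero components rendered unstable after composition with $p$, so $\nu_*\mathcal{O}_{\mathcal{C}}=\mathcal{O}$ and $R^1\nu_*\mathcal{O}_{\mathcal{C}}=0$. The projection formula then gives $\mathcal{R}^{\bullet}\pi_*(p\circ f)^*T_Y\cong\bar{p}^*\mathcal{R}^{\bullet}\pi_*g^*T_Y=\bar{p}^*E_{\epsilon_{Y,p_*\beta}}^{\vee}$. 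Dualising the triangle and comparing it, through Definition \ref{compatib} and Theorem \ref{functoriality}, with the triangle of relative cotangent complexes for $A\xrightarrow{\bar{p}}B\to\mathfrak{M}_Y$, I would conclude that $E_{\bar{p}}:=(\mathcal{R}^{\bullet}\pi_*f^*T_{X/Y})^{\vee}$ is a genuine relative obstruction theory for $\bar{p}$.

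Finally, perfection is the easy part: because $p$ is smooth, $T_{X/Y}$ is a vector bundle of rank $\dim X-\dim Y$, so $f^*T_{X/Y}$ is locally free on $\mathcal{C}$ and $\pi$ has one-dimensional fibres; hence $\mathcal{R}^{\bullet}\pi_*f^*T_{X/Y}$ is perfect of amplitude $[0,1]$ and its dual $E_{\bar{p}}$ is perfect of amplitude $[-1,0]$. I expect the main obstacle to be the central step: verifying $\nu_*\mathcal{O}=\mathcal{O}$ and $R^1\nu_*\mathcal{O}=0$ for the contracted (unstable, genus-zero, $p$-vertical) components, and, more delicately, upgrading the distinguished triangle of complexes to an actual morphism of triangles of cotangent complexes so that the compatible-triple hypotheses of Theorem \ref{functoriality} are met. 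The \'etaleness of $\psi$ is the other input that must be cited carefully from \cite{c}.
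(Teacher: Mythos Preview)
Your overall architecture---build the compatible triple via Costello's diagram, use the short exact sequence of (co)tangent sheaves, and identify $\mathcal{R}^{\bullet}\pi_*f^*p^*T_Y$ with $\bar{p}^*\mathcal{R}^{\bullet}\pi_*g^*T_Y$ via the contraction of unstable rational components---is exactly the route the paper takes. The identification you call the ``central step'' is what the paper imports as Corollary 5.3 of \cite{eu}, and your argument for it (projection formula plus $R\nu_*\mathcal{O}_{\mathcal{C}}\simeq\mathcal{O}$) is the standard one.

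There is, however, a genuine gap. You assert that Costello's morphism $\psi:\mathfrak{M}_{g,n,\beta}\to\mathfrak{M}_{g,n,p_*\beta}$ is \'etale, and from this deduce $L_\psi=0$ and hence $E_{\bar p}\simeq(\mathcal{R}^{\bullet}\pi_*f^*T_{X/Y})^{\vee}$. This is not what \cite{c} says: it is the forgetful maps $\mathfrak{M}_{g,n,\beta}\to\mathfrak{M}_{g,n}$ and $\mathfrak{M}_{g,n,p_*\beta}\to\mathfrak{M}_{g,n}$ that are \'etale, not $\psi$. The map $\psi$ relabels components by $p_*$ and then \emph{contracts} the genus-zero components whose label has become zero and which are now unstable; its fibres are positive-dimensional in general and $L_\psi\neq 0$. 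Consequently $E_{\epsilon_{X,\beta}}$ is \emph{not} an obstruction theory for the composite $A\to\mathfrak{M}_Y$, and $E_{\bar p}$ is \emph{not} isomorphic to $(\mathcal{R}^{\bullet}\pi_*f^*T_{X/Y})^{\vee}$.

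The paper repairs exactly this point. One first builds an obstruction theory $E$ for $\psi\circ\epsilon_{X,\beta}=\epsilon_{Y,p_*\beta}\circ\bar p$ by completing $E_{\epsilon_{X,\beta}}[-1]\to\epsilon_{X,\beta}^*L_\psi$ to a triangle, then defines $E_{\bar p}$ as the cone of $\bar p^*E_{\epsilon_{Y,p_*\beta}}\to E$. The octahedron axiom then yields a distinguished triangle
\[
\epsilon_{X,\beta}^*L_\psi\ \longrightarrow\ E_{\bar p}\ \longrightarrow\ \mathcal{R}^{\bullet}\pi_*ev_X^*\Omega_{X/Y}\ \longrightarrow\ \epsilon_{X,\beta}^*L_\psi[1],
\]
so $E_{\bar p}$ differs from the complex you wrote down by the nontrivial term $\epsilon_{X,\beta}^*L_\psi$. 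Perfection of $E_{\bar p}$ in $[-1,0]$ then follows from the long exact sequence in cohomology: $L_\psi$ is perfect in $[-1,0]$ since $\psi$ is a morphism of smooth Artin stacks, and $h^{-2}(\mathcal{R}^{\bullet}\pi_*ev_X^*\Omega_{X/Y})=0$ because $\pi$ has one-dimensional fibres. Your final perfection argument only treats $\mathcal{R}^{\bullet}\pi_*f^*T_{X/Y}$ and therefore does not, by itself, establish perfection of the correct $E_{\bar p}$.
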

\begin{proof} By the discussion in the above paragraph we have that $\mathfrak{M}_{g,n, p_*\beta}$ is \'etale over $\mathfrak{M}_{g,n}$ and therefore $$[\overline{M}_{g,n}(X,\beta)]^{\vv}=\epsilon_{X,\beta}^![\mathfrak{M}_{g,n, \beta}]$$ and similarly $$[\overline{M}_{g,n}(Y,p_*\beta)]^{\vv}=\epsilon_{Y,p_*\beta}^![\mathfrak{M}_{g,n, \beta}].$$
\\\textit{Step 1.} Let us consider the following exact sequence
\begin{equation*}
p^*\Omega_Y\to \Omega_X\to \Omega_{X/Y}
\end{equation*}
and let us look at the induced distinguished triangle
\begin{equation}\label{derivedseq}
\mathcal{R}^{\bullet}\pi_*ev^*_Xp^*\Omega_Y\to \mathcal{R}^{\bullet}\pi_*ev^*_X\Omega_X\to\mathcal{R}^{\bullet}\pi_*ev^*_X\Omega_{X/Y}\to\mathcal{R}^{\bullet}\pi_*ev^*_Xp^*\Omega_Y[1].
\end{equation}
By Corollary 5.3 in \cite{eu}, we have that $\mathcal{R}^{\bullet}\pi_*ev^*_Xp^*\Omega_Y\simeq p^*\mathcal{R}^{\bullet}\pi_*ev^*_Y\Omega_Y$. In notations as in the beginning of the section we can rewrite triangle \ref{derivedseq} as
\begin{equation}
p^*E_{\epsilon_{Y,p_*\beta}}\to E_{\epsilon_{X,\beta}}\to\mathcal{R}^{\bullet}\pi_*ev^*_X\Omega_{X/Y}\to p^*E_{\epsilon_{Y,p_*\beta}}[1].
\end{equation}
Let us note that all complexes are perfect.
\\\textit{Step 2.} The morphism obtained from the following composition $$E_{\epsilon_{X,\beta}}[-1]\to L_{\epsilon_{X,\beta}}[-1]\to\epsilon_{X,\beta}^* L_{\psi}$$  can be completed to a triangle $$E_{\epsilon_{X,\beta}}[-1]\to \epsilon_{X,\beta}^*L_{\psi}\to E_{\psi\circ\epsilon_{X,\beta}}\to E_{\epsilon_{X,\beta}}.$$ By the axioms of triangulated categories we obtain a morphism $E:=E_{\psi\circ\epsilon_{X,\beta}}\to L_{\psi\circ\epsilon_{X,\beta}}$, which it can be easily seen to be an obstruction theory to the morphism $\psi\circ\epsilon_{X,\beta}$. In a similar way we obtain  a complex which we denote by $E_{\bar{p}}$ such that the triangle
\begin{equation}
\bar{p}^*E_{Y,p_*\beta}\to E\to E_{\bar{p}}\to \bar{p}^*E_{Y,p_*\beta}[1]
\end{equation}
is distinguished. By the octahedron axiom we obtain that the triangle
\begin{equation}
\epsilon_{X,\beta}^*L_{\psi}\to E_{\bar{p}}\to \mathcal{R}^{\bullet}\pi_*ev^*_X\Omega_{X/Y}\to \epsilon_{X,\beta}^*L_{\psi}[1]
\end{equation}
is distinguished. From the long exact sequence in cohomology and the fact that $h^{-2}(\mathcal{R}^{\bullet}\pi_*ev^*_X\Omega_{X/Y})=0$ we obtain that $E_{\bar{p}}$ is a perfect obstruction theory for the morphism $\bar{p}$. This shows the claim.
\end{proof}
\begin{proposition}\label{projbdl} Let $p:X\to\pp^r$ be a smooth morphism.  If $\bar{p}$ has strictly positive virtual relative dimension, then $\bar{p}: \overline{M}_{g,n}(X,\beta)\to\overline{M}_{g,n}(\pp^r,p_*\beta)$ satisfies the strong push-forward property.
\end{proposition}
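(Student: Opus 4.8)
The plan is to obtain the statement as a direct application of Theorem \ref{connected}: it suffices to show that $\bar p$ is a virtually smooth morphism in the sense of Definition \ref{virtflat} and that its target $\overline{M}_{g,n}(\pp^r,p_*\beta)$ is connected. Unwinding Definition \ref{virtflat}, I must check that $\bar p$ is proper and surjective, that both moduli spaces carry virtual classes of virtual dimensions $k_1\geq k_2$, and that there is a compatible triple whose relative member $E_{\bar p}$ is perfect. Granting these together with connectedness of the target, Theorem \ref{connected} immediately yields the strong push-forward property.

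First I would dispose of the routine inputs. The virtual classes are the ones furnished by the standard obstruction theory of the previous subsection, and the inequality $k_1\geq k_2$ is forced by the hypothesis that $\bar p$ has strictly positive virtual relative dimension, which in fact gives $k_1>k_2$. Properness is automatic, since $\bar p$ is a morphism between proper Deligne--Mumford stacks.

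The heart of the argument is the compatible triple, and this is the step I expect to be the main obstacle. Here I would exploit Costello's construction \cite{c} and the attendant commutative square relating $\epsilon_{X,\beta}$ and $\epsilon_{\pp^r,p_*\beta}$ over the stacks $\mathfrak{M}_{g,n,\beta}$ and $\mathfrak{M}_{g,n,p_*\beta}$. The preceding proposition already manufactures, out of the relative cotangent sequence of $p$ and the octahedron axiom, the perfect relative obstruction theory $E_{\bar p}$ together with an obstruction theory $E$ for the composite $\epsilon_{\pp^r,p_*\beta}\circ\bar p=\psi\circ\epsilon_{X,\beta}$ and the distinguished triangle $\bar p^*E_{\epsilon_{\pp^r,p_*\beta}}\to E\to E_{\bar p}\to\bar p^*E_{\epsilon_{\pp^r,p_*\beta}}[1]$. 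To promote the triple $(E_{\bar p},E_{\epsilon_{\pp^r,p_*\beta}},E)$ to a compatible triple in the sense of Definition \ref{compatib}, I would verify that this triangle admits a compatible morphism to the triangle $\bar p^*L_{\epsilon_{\pp^r,p_*\beta}}\to L_{\psi\circ\epsilon_{X,\beta}}\to L_{\bar p}\to \bar p^*L_{\epsilon_{\pp^r,p_*\beta}}[1]$ of cotangent complexes; this is essentially how the triangle was built, but keeping track of all three obstruction theories and their comparison maps to the cotangent complexes simultaneously is the delicate point. Because $\mathfrak{M}_{g,n,\beta}$ and $\mathfrak{M}_{g,n,p_*\beta}$ are étale over $\mathfrak{M}_{g,n}$, the virtual class computed from $E$ agrees with $[\overline{M}_{g,n}(X,\beta)]^{\vv}$, so Theorem \ref{functoriality} identifies it with $E_{\bar p}^![\overline{M}_{g,n}(\pp^r,p_*\beta)]^{\vv}$, as required for virtual smoothness.

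It remains to check surjectivity of $\bar p$ and connectedness of the target. For surjectivity I would lift an arbitrary stable map $g:C\to\pp^r$ of class $p_*\beta$ along the fibration $p$: pulling $p$ back along $g$ yields a smooth fibration over the nodal curve $C$ which admits a section, producing a lift $\tilde g:C\to X$ with $p\circ\tilde g=g$; the class of $\tilde g$ differs from $\beta$ only by a vertical (fiber) class, which can be corrected by attaching rational tails mapping into fibers, so $\bar p$ is surjective. Connectedness of $\overline{M}_{g,n}(\pp^r,p_*\beta)$ is the classical connectedness of the moduli space of stable maps to projective space. With $\bar p$ now recognized as a virtually smooth morphism onto a connected target, Theorem \ref{connected} applies and gives the strong push-forward property.
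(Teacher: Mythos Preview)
Your overall strategy matches the paper's exactly: invoke the preceding proposition to get a perfect relative obstruction theory for $\bar p$ sitting in a compatible triple, cite connectedness of $\overline{M}_{g,n}(\pp^r,p_*\beta)$ (the paper cites Kim--Pandharipande \cite{kp}), and then apply Theorem \ref{connected}. The paragraph you devote to ``promoting the triple to a compatible triple'' is redundant: the distinguished triangle $\bar p^*E_{\epsilon_{\pp^r,p_*\beta}}\to E\to E_{\bar p}$ and its map to the triangle of cotangent complexes are precisely what the preceding proposition builds, so you can simply cite it rather than re-run the octahedron argument.

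The one genuine gap is your surjectivity argument. A smooth morphism $p:X\to\pp^r$ pulled back along a map $g:C\to\pp^r$ is a smooth family over a nodal curve, but such a family has no reason to admit a section: think of a nontrivial family of abelian varieties or K3 surfaces over $\pp^1$. Even when a section exists, the claim that the discrepancy $\beta-[\tilde g]$ ``can be corrected by attaching rational tails mapping into fibers'' presumes that this vertical class is effective and representable by genus-zero curves in the fibers, which is false in general (the fibers need not be uniruled, let alone contain rational curves of the prescribed class). So this step does not go through as written. Note, incidentally, that the paper's own two-line proof does not address surjectivity either; if you trace through the proof of Theorem \ref{connected}, surjectivity (although it appears in Definition \ref{virtflat}) is not actually used---the conclusion $p_*(\gamma\cdot[F]^{\vv})=n[G]^{\vv}$ follows from Lemma \ref{vpf} plus connectedness alone, with $n$ possibly zero. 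So the cleanest fix is to drop the surjectivity paragraph and observe that the hypotheses of Lemma \ref{vpf} and the connectedness of the target already suffice for Theorem \ref{connected}.
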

\begin{proof} By \cite{kp} $\overline{M}_{g,n}(\pp^r,p_*\beta)$ is connected and by Theorem \ref{connected} $\bar{p}$
 satisfies the strong virtual push-forward property.
 \end{proof}
 \begin{proposition}Let $L_1,...,L_s$ be very ample line bundles on a smooth projective variety $X$ and let us consider a projective bundle $p:\pp_X(\oplus L_i)\to X$. Then the induced morphism $\bar{p}: \overline{M}_{g,n}(\pp_X(\oplus L_i),\beta)\to\overline{M}_{g,n}(X,p_*\beta)$ satisfies the strong push-forward property.
 \end{proposition}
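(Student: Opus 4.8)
The plan is to realize $\bar{p}$ as a base change of a projective bundle over a \emph{connected} (indeed homogeneous) target, so that Corollary \ref{basech} applies despite the fact that $\overline{M}_{g,n}(X,p_*\beta)$ need not itself be connected. The geometric input, very ampleness, enters only to linearize the bundle: each $L_i$ very ample gives a closed embedding $\phi_i:X\hookrightarrow\pp^{r_i}$, and hence a closed embedding $\phi:X\hookrightarrow W:=\pp^{r_1}\times\cdots\times\pp^{r_s}$ with $\phi^*M_i=L_i$, where $M_i:=\mathrm{pr}_i^*\mathcal{O}(1)$. Then $\oplus_iL_i=\phi^*(\oplus_iM_i)$, so that
\[
\pp_X(\oplus_iL_i)=X\times_W\pp_W(\oplus_iM_i),
\]
a cartesian square whose vertical arrows are the two projective bundle projections $p$ and $P:\pp_W(\oplus_iM_i)\to W$, and whose horizontal arrows are $\phi$ and its base change $j$ (a closed embedding).

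First I would pass to moduli of stable maps and check that the induced square
\[
\xymatrix{\overline{M}_{g,n}(\pp_X(\oplus L_i),\beta)\ar[r]^{\bar{j}}\ar[d]_{\bar{p}}&\overline{M}_{g,n}(\pp_W(\oplus M_i),j_*\beta)\ar[d]^{\bar{P}}\\ \overline{M}_{g,n}(X,p_*\beta)\ar[r]^{\bar{\phi}}&\overline{M}_{g,n}(W,\phi_*p_*\beta)}
\]
is cartesian, with $\bar\phi,\bar{j}$ closed embeddings: a stable map to $\pp_W(\oplus M_i)$ factors through the closed substack $\pp_X(\oplus L_i)$ precisely when its composite to $W$ factors through $X$, which is exactly the fiber–product condition. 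Since $W$ is a product of projective spaces, hence homogeneous, $\overline{M}_{g,n}(W,\phi_*p_*\beta)$ is connected by \cite{kp}; and since $P$ is a smooth projective bundle, the proposition above (smooth morphisms induce perfect relative obstruction theories) shows $\bar{P}$ has a perfect relative obstruction theory, and it is proper and surjective of nonnegative virtual relative dimension, the fibers $\pp^{s-1}$ being convex. Thus $\bar{P}$ is virtually smooth. Theorem \ref{connected} then gives the strong virtual push-forward property for $\bar{P}$ over the connected base $\overline{M}_{g,n}(W,\phi_*p_*\beta)$, and Corollary \ref{basech}, applied to the cartesian square, transports it to $\bar{p}$, yielding $\bar{p}_*(\gamma\cdot[\overline{M}_{g,n}(\pp_X(\oplus L_i),\beta)]^{\vv})=n[\overline{M}_{g,n}(X,p_*\beta)]^{\vv}$.

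The main obstacle is the bookkeeping that makes this formal argument literally apply. Corollary \ref{basech} produces the \emph{induced} virtual class $\bar{p}^![\overline{M}_{g,n}(X,p_*\beta)]^{\vv}$ of Remark \ref{induced}, so one must verify, via the functoriality of virtual pull-backs (Theorem \ref{functoriality}, using the compatible triple furnished by the smoothness of $P$ exactly as in the proof that $\bar{P}$ has a perfect relative obstruction theory), that this coincides with the standard virtual class of $\overline{M}_{g,n}(\pp_X(\oplus L_i),\beta)$; this is where I expect most of the care to be required. A secondary point is to confirm that the fiber product on moduli spaces cuts out exactly the locus of the prescribed class $\beta$ rather than a union over classes $\beta'$ with $j_*\beta'=j_*\beta$; since such a locus is open and closed, I would simply restrict to the component of degree $\beta$. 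Everything else reduces to the already-established machinery of virtual pull-backs.
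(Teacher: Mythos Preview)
Your proposal is correct and follows essentially the same approach as the paper: embed $X$ via the very ample line bundles into a product of projective spaces $W=\pp^{r_1}\times\cdots\times\pp^{r_s}$, realize $\pp_X(\oplus L_i)$ as the pullback of the corresponding projective bundle over $W$, and then invoke the connectedness of $\overline{M}_{g,n}(W,\phi_*p_*\beta)$ together with Corollary~\ref{basech}. The paper's proof is a two-line sketch that omits the bookkeeping you flag (cartesianness at the level of moduli, agreement of the induced and standard virtual classes), so your write-up is in fact more careful than the original.
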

\begin{proof}Let us consider $j_i:X\to\pp^{r_i}$ to be the embedding of $X$ into a projective space induced by the line bundle $L_i$. Then we have a Cartesian diagram
\begin{equation*}
\xymatrix{\pp_X(\oplus L_i)\ar[r]\ar[d]&\pp_{\pp^{r_1}\times...\times\pp^{r_s}}(\oplus\mathcal{O}(1))\ar[d]\\
X\ar[r]^{j_1\times...\times j_s\ \ \ \ \ \ \ \ }&\pp^{r_1}\times...\times\pp^{r_s}}
\end{equation*}
The conclusion follows by the above proposition and Corollary \ref{basech}.
\end{proof}
\subsection{Stable maps and stable quotients}
In this section we want to analyze the push forward of the virtual class of the moduli space of stable maps $\bar{M}_{g,n}(\G(1,n),d)$ along the morphism $$c:\bar{M}_{g,m}(\G(1,n),d)\to \bar{Q}_{g,m}(\G(1,n),d)$$ which was introduced in \cite{mop}. Let us briefly recall the basic definitions.

\paragraph{Stable quotients.} Let $(C,p_1,...,p_m)$ be a nodal curve of genus $g$ with $m$ distinct markings which are different from the nodes. A quotient on $C$ $$0\to S\to\mathcal{O}_C^n\stackrel{q}{\rightarrow} Q$$ is called \textit{quasi-stable} if the torsion sheaf $\tau(Q)$ is not supported on nodes or markings. Let $r$ be the rank of $S$. A quotient $(C,p_1,...,p_m,q)$ is called stable if $$\omega_C(p_1+...+p_m)\otimes (\wedge^rS^{\vee})^{\epsilon}$$ is ample on $C$ for every strictly positive $\epsilon \in\Q$.
\begin{remark}The space of stable quotients $\bar{Q}_{g,m}(\G(r,n),d)$ is an other compactification of the space of genus $g$ curves (with $m$ marks) in the Grassmannian $\G(r,n)$. This can be easily seen from the universal property of the tautological sequence on the Grassmannian: to give a curve $C\stackrel{i}\hookrightarrow\G(r,n)$ is equivalent to giving a quotient $$0\to i^*S\to\mathcal{O}_C^n\to i^*Q,$$ where
\begin{equation*}
0\to S\to \mathcal{O}^n\to Q\to 0
\end{equation*}
is the tautological sequence on the Grassmannian.
\end{remark}
\paragraph{Obstruction theory.} As the moduli space of stable maps, the moduli space of stable quotients $\bar{Q}_{g,m}(\G(r,n),d)$ has a morphism $\nu:\bar{Q}_{g,m}(\G(r,n),d)\to\mathfrak{M}_{g,m}$to the Artin stack of nodal curves. Let $p:\bar{U}\to\bar{Q}_{g,m}(\G(r,n),d)$ be the universal curve over $\bar{Q}_{g,m}(\G(r,n),d)$ and let $$0\to\mathcal{S}\to\mathcal{O}_{\bar{U}}^n\to\mathcal{Q}\to0$$ be the universal sequence on $\bar{U}$. Then the complex $$Rp_*RHom(\mathcal{S},\mathcal{Q})$$ is the obstruction theory relative to $\nu$.
\paragraph{Stable maps and stable quotients.}\label{mapstoquot}
\begin{proposition}
When $r=1$ there exists a map $c:\bar{M}_{g,m}(\G(1,n),d)\to \bar{Q}_{g,m}(\G(1,n),d)$ extending the isomorphism on smooth curves.
\end{proposition}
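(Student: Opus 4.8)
The plan is to construct the map $c$ directly, and then verify that it is a morphism of moduli stacks by checking the valuative criterion together with the stability conditions. Recall from the tautological-sequence remark above that giving a stable map $f:C\to\G(1,n)$ of degree $d$ is equivalent to giving a line subbundle $S:=f^*\mathcal{O}(-1)\hookrightarrow \mathcal{O}_C^n$, i.e.\ a quotient $0\to S\to \mathcal{O}_C^n\to Q\to 0$ with $S$ a line bundle (locally free of rank $1$) and $\deg S=-d$. In particular a stable map produces a quotient in which $Q$ is \emph{locally free}, hence in particular one whose torsion $\tau(Q)$ vanishes; such a quotient is trivially quasi-stable. So set-theoretically $c$ sends the stable map data $(C,p_1,\dots,p_m,f)$ to the quotient data $(C,p_1,\dots,p_m,q)$ with the \emph{same} underlying curve and markings.

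The key steps, in order, are as follows. First I would produce $c$ as a natural transformation of the functors of points: given a family of stable maps over a base $T$, pull back the universal subbundle along $\mathrm{ev}$ to obtain a family of rank-$1$ quotients over the same family of curves, which is the required family of stable quotients; naturality in $T$ is immediate, so this defines a morphism of stacks. Second, I would check that the output really lands in $\bar{Q}_{g,m}(\G(1,n),d)$, i.e.\ that the quotient associated to a stable map is stable in the sense of the definition above: with $r=1$ one must verify that $\omega_C(p_1+\dots+p_m)\otimes(S^{\vee})^{\epsilon}$ is ample for all $\epsilon>0$. Here is the one point where the two stability conditions genuinely differ and where contraction is invisible: on components where $f$ is constant (so $\deg S=0$ there) stable-map stability already forces $\omega_C(\sum p_i)$ to be ample on that component, and on components where $f$ is nonconstant the extra positive factor $(S^{\vee})^{\epsilon}=(f^*\mathcal{O}(1))^\epsilon$ only adds positivity; so ampleness holds. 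Third, I would verify that $c$ restricts to the identity on the locus of smooth curves (where a degree-$d$ map to $\pp^{n-1}=\G(1,n)$ and a rank-$1$ locally free quotient of $\mathcal{O}^n_C$ carry exactly the same data), which is the asserted extension property.

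The main obstacle I anticipate is the stability verification in the second step, specifically handling the rational components that are unstable \emph{for the map} but need not be contracted on the quotient side. For a stable map, rational tails carrying no markings and nonzero degree are stable and are kept; the worry is components on which $\deg S<0$ but which carry too few special points to be map-stable. However, a nonconstant rational component is automatically map-stable, and a constant rational component already satisfies the ampleness of $\omega_C(\sum p_i)$ needed for quotient stability, so no contraction occurs and the underlying curve is genuinely unchanged. Making this case analysis airtight—checking ampleness degree-by-degree on each irreducible component and at the nodes, and confirming $\tau(Q)=0$ so the quasi-stability hypothesis is satisfied—is the technical heart of the argument; everything else (flatness of the family, functoriality, and the identification on the open smooth locus) is formal.
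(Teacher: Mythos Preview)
Your proposal has a genuine gap: the claim that ``no contraction occurs and the underlying curve is genuinely unchanged'' is false, and with it the verification of quotient-stability fails. Consider a rational tail of the domain curve, i.e.\ a $\pp^1$-component $R$ meeting the rest of $C$ in a single node and carrying no markings, on which $f$ has degree $d_R>0$. Such a component is perfectly allowed in a stable map. Restricting $\omega_C(p_1+\cdots+p_m)\otimes(S^{\vee})^{\epsilon}$ to $R$ gives a line bundle of degree $-1+\epsilon\, d_R$, which is \emph{negative} for all $0<\epsilon<1/d_R$. Hence the quotient you produce is \emph{not} stable in the sense of the definition (ampleness is required for \emph{every} strictly positive rational $\epsilon$), and your map does not land in $\bar{Q}_{g,m}(\G(1,n),d)$. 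Your case analysis went wrong at ``on components where $f$ is nonconstant the extra positive factor only adds positivity'': on a rational tail $\omega_C(\sum p_i)$ has degree $-1$, not $\geq 0$, and an $\epsilon$-small twist cannot repair this.

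The paper's construction addresses exactly this point. One first contracts all rational tails via $q:C\to\widehat{C}$, and then compensates for the lost degree by twisting: the subsheaf on $\widehat{C}$ is $q_*\bigl(S\otimes\mathcal{O}(-\sum \delta_i E_i)\bigr)$, equivalently $S(-\sum d_i x_i)$ where $x_i$ are the attaching points and $d_i$ the degrees on the contracted tails. The resulting quotient $0\to S(-\sum d_i x_i)\to\mathcal{O}_{\widehat{C}}^n\to\widehat{Q}\to 0$ now has torsion supported at the $x_i$ (which are smooth, non-marked points of $\widehat{C}$), so it is quasi-stable, and on $\widehat{C}$ every component carries at least two special points, so the ampleness condition holds for all small $\epsilon$. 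Thus contraction is not optional; it is the essential content of the construction of $c$.
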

\begin{proof} This has been proved in \cite{mop} and a similar situation appears in \cite{mihnea}. Let us shortly sketch the proof. Let $(\pi_Y,f):Y\to S\times\G(1,n)$ be a family of stable maps to $\G(1,n)$. As described in the above remark, this comes with an exact sequence $$0\to f^*S\to\mathcal{O}^n\to f^*Q\to0.$$
Let $\pi_X: X\to S$, be the family of curves obtained by contracting all rational trees with no marked points and let $q:Y\to X$ be the contracting morphism. In the following, we will give a canonical way to associate a quasi-stable quotient to the family $\pi_X$.
We denote by
\begin{equation*}
0\to \mathcal{S}\to \mathcal{O}^n\to \mathcal{Q}\to 0
\end{equation*}
the tautological sequence on the universal curve over $\bar{Q}_{g,m}(\G(1,n),d)$.
Let $E$ be either a divisor or a component of $\bar{M}_{g,m+1}(\G(1,n),d)$ such that the general element of the map to $\bar{M}_{g,m}(\G(1,n),d)$ is an irreducible rational curve, with the additional condition that this general fiber touches only one other curve in the domain of the map it is associated to. For each such locus $E$ there is a well defined line bundle $\mathcal{O}(E)$ on $\bar{M}_{g,m+1}(\G(1,n),d)$. This line bundle has degree $-1$ when restricted to the general fiber of the induced map from $E$ to 
$\bar{M}_{g,m}(\G(1,n),d)$. We attach the weight $\delta$ to such a $E$ if the degree of $\mathcal{S}$ restricted to the 
general fiber in $D$ is $-\delta$. We consider the bundle $$\mathcal{S}^{\prime}:=S\otimes\mathcal{O}(-\delta E)$$ which is trivial along the rational tails. Then it can be showed that $q_*\mathcal{S}^{\prime}$ is a stable quotient. 
\end{proof}
\begin{remark}The above map associates to a map $f:C\to\G(1,n)$, the curve $\widehat{C}$ obtained by contracting the rational tails and the exact sequence $$0\to S(-\sum d_ix_i)\to \mathcal{O}_{\widehat{C}}^n\to\widehat{Q}\to0,$$
where $x_i$ are the points on $C$ where the rational trees glue the rest of the curve and $d_i$ is the degree of $f$ on the tree $C_i$.
\end{remark}
\paragraph{Compatibility of obstructions.}
\begin{lemma}There exists a morphism $$R\pi_*f^*T_{\G(1,n)}\to c^*Rp_*RHom(\mathcal{S},\mathcal{Q}).$$
\end{lemma}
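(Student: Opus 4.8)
The plan is to build the comparison of relative obstruction theories from the explicit description of $c$ recalled above, after first rewriting the stable-map side in tautological terms. Since $\G(1,n)=\pp^{n-1}$ and its tangent bundle is $T_{\G(1,n)}\cong\mathcal{H}om(S,Q)$ for the tautological sequence $0\to S\to\mathcal{O}^n\to Q\to 0$, pulling back along the universal map $f$ gives $f^*T_{\G(1,n)}\cong\mathcal{H}om(f^*S,f^*Q)$, and since $f^*S$ is a line bundle this equals $R\mathcal{H}om(f^*S,f^*Q)$. Hence $R\pi_*f^*T_{\G(1,n)}\cong R\pi_*R\mathcal{H}om(f^*S,f^*Q)$, and the problem becomes one of comparing two sheaf-$\mathrm{Hom}$ complexes living on the two universal curves.

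Next I would organize the two universal curves. Write $\bar{U}_M:=\bar{U}\times_{\bar{Q}_{g,m}(\G(1,n),d)}\bar{M}_{g,m}(\G(1,n),d)$ for the pullback of the stable-quotient universal curve, with projection $p_M:\bar{U}_M\to\bar{M}_{g,m}(\G(1,n),d)$, and let $q:\mathcal{C}\to\bar{U}_M$ be the contraction of rational tails, so that $\pi=p_M\circ q$. By cohomology and base change the target may be rewritten as $c^*Rp_*R\mathcal{H}om(\mathcal{S},\mathcal{Q})\cong R(p_M)_*R\mathcal{H}om(\mathcal{S}_M,\mathcal{Q}_M)$, where $\mathcal{S}_M,\mathcal{Q}_M$ denote the pullbacks of the universal sub and quotient. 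Using $\pi=p_M\circ q$ and $R\pi_*=R(p_M)_*\circ Rq_*$, it then suffices to produce on $\bar{U}_M$ a morphism
\[
Rq_*R\mathcal{H}om(f^*S,f^*Q)\longrightarrow R\mathcal{H}om(\mathcal{S}_M,\mathcal{Q}_M)
\]
and to push it forward by $R(p_M)_*$.

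To construct this morphism I would use the explicit geometry of $c$. The construction recalled above identifies, on $\mathcal{C}$, the pulled-back stable-quotient sub with the twisted line bundle $f^*S(-\sum d_i x_i)\subset f^*S$; this yields a canonical inclusion $\mathcal{S}_M\hookrightarrow q_*f^*S$, and by adjunction a map $q^*\mathcal{S}_M\to f^*S$. On the quotient side, the two surjections of the trivial bundle $\mathcal{O}^n$ onto $f^*Q$ and onto $\mathcal{Q}_M$, together with the fact that $Rq_*\mathcal{O}_{\mathcal{C}}=\mathcal{O}_{\bar{U}_M}$ (the rational tails being trees of rational curves), relate $Rq_*f^*Q$ and $\mathcal{Q}_M$. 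Applying $Rq_*$ to the tautological triangle $f^*S\to\mathcal{O}^n\to f^*Q$ on $\mathcal{C}$ and comparing it with the tautological sequence $\mathcal{S}_M\to\mathcal{O}^n\to\mathcal{Q}_M$ on $\bar{U}_M$ then yields the required morphism of $\mathrm{Hom}$-complexes; applying $R(p_M)_*$ and base change completes the construction.

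The hard part will be this last step: turning the fiberwise principle that a first-order deformation of the stable map induces a first-order deformation of the associated quotient into a canonical morphism \emph{in families} and \emph{in the derived category}, rather than a mere fiberwise statement. Concretely, one must check that the comparison maps between $Rq_*f^*S$, $Rq_*f^*Q$ and $\mathcal{S}_M,\mathcal{Q}_M$ are well defined globally over the whole moduli space, which requires controlling the torsion of $\mathcal{Q}_M$ concentrated at the attaching nodes as well as the higher direct image $R^1q_*$ of the twisted line bundle $f^*S$ along the contracted rational tails (where $f^*S$ has negative degree). Once this morphism is in place it is precisely the datum needed, via the distinguished triangles of relative obstruction theories over $\mathfrak{M}_{g,m}$, to exhibit a compatible triple in the sense of Definition \ref{compatib}, and hence to bring $c$ within the scope of Theorem \ref{connected}.
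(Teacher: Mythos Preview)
Your overall strategy coincides with the paper's: rewrite the target via base change, factor $\pi$ through the contraction $q:\mathcal{C}\to\bar{U}_M$, use $Rq_*\mathcal{O}_{\mathcal{C}}\simeq\mathcal{O}_{\bar{U}_M}$, and reduce everything to a comparison on $\bar{U}_M$. The inclusion $q^*\mathcal{S}_M\hookrightarrow f^*S$ you single out is also exactly the datum the paper uses. The gap is at the step ``comparing the two tautological triangles yields the required morphism of $\mathrm{Hom}$-complexes.'' Pushing forward $f^*S\to\mathcal{O}^n\to f^*Q$ and comparing with $\mathcal{S}_M\to\mathcal{O}^n\to\mathcal{Q}_M$ produces arrows $\mathcal{S}_M\to Rq_*f^*S$ and $\mathcal{Q}_M\to Rq_*f^*Q$, both pointing the \emph{same} way. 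To get a map $Rq_*\mathcal{H}om(f^*S,f^*Q)\to\mathcal{H}om(\mathcal{S}_M,\mathcal{Q}_M)$ you would need the arrow on quotients to go the other way, and there is no natural map $Rq_*f^*Q\to\mathcal{Q}_M$ or $f^*Q\to q^*\mathcal{Q}_M$: the obvious candidate fails because $f^*S\not\subset q^*\mathcal{S}_M=f^*S(-E)$. The issues you flag (torsion of $\mathcal{Q}_M$, $R^1q_*$ of $f^*S$) are symptoms of this, but the actual obstruction is directional, not analytic.

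The paper's remedy is to dualize before comparing: replace the tautological sequence by the Euler sequence $0\to\mathcal{O}\to (f^*S^{\vee})^{\oplus n}\to f^*T_{\G(1,n)}\to 0$ on $\mathcal{C}$ and $0\to\mathcal{O}\to(\mathcal{S}_M^{\vee})^{\oplus n}\to\mathcal{S}_M^{\vee}\otimes\mathcal{Q}_M\to 0$ on $\bar{U}_M$. Dualizing $q^*\mathcal{S}_M\hookrightarrow f^*S$ gives $f^*S^{\vee}\to q^*\mathcal{S}_M^{\vee}$, hence by adjunction a map $Rq_*f^*S^{\vee}\to\mathcal{S}_M^{\vee}$ in the correct direction; together with the isomorphism $Rq_*\mathcal{O}\simeq\mathcal{O}$ on the first terms this yields a morphism of distinguished triangles, and the induced arrow on the third terms is precisely $Rq_*f^*T_{\G(1,n)}\to\mathcal{S}_M^{\vee}\otimes\mathcal{Q}_M$. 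Applying $R(p_M)_*$ then finishes the construction. This passage to $S^{\vee}$ is the one move missing from your sketch.
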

\begin{proof}
The complex $R\pi_*f^*T_{\G(1,n)}$ on $\bar{M}_{g,m}(\G(1,n),d)$ is given by the following:
\\ (i) For every family of maps $\pi_X:X\to S$ the complex $R\pi_*g_X^*Q\otimes S^{\vee}$, where $g$ is the map induced by the morphism $S\to\bar{M}_{g,m}(\G(1,n),d)$ and the evaluation $ev_{m+1}\bar{M}_{g,m+1}(\G(1,n),d)\to\G(1,n)$.
\\ (ii)For every morphism $\varphi:S^{\prime}\to S$ the canonical isomorphism $$R(\pi_{X^{\prime}})_*g_{X^{\prime}}^*Q\otimes S^{\vee}\simeq \varphi^*R(\pi_X)_*g_X^*Q\otimes S^{\vee}.$$
Similarly, the complex $Rp_*RHom(\mathcal{S},\mathcal{Q})$ is constructed by giving for any family $\pi_Y:Y\to S$ a complex $R(p_Y)_*RHom(\mathcal{S}_Y,\mathcal{Q}_Y)$ and the obvious isomorphisms.
\\In the following we will construct the desired morphism on families.
\\Let us first write $Rp_*RHom(\mathcal{S},\mathcal{Q})$ in a different form. Using the basic compatibilities of derived functors and the fact that $\mathcal{S}$ is a line bundle we obtain
\begin{align*}
Rp_*RHom(\mathcal{S},\mathcal{Q})&=Rp_*RHom(\mathcal{O}_{\bar{U}},\mathcal{S}^{\vee}\otimes\mathcal{Q})\\
&=Rp_*RHom(p^*\mathcal{O}_{\bar{Q}},\mathcal{S}^{\vee}\otimes\mathcal{Q})\\
&=RHom(\mathcal{O}_{\bar{Q}},Rp_*\mathcal{S}^{\vee}\otimes\mathcal{Q})\\
&=Rp_*(\mathcal{S}^{\vee}\otimes\mathcal{Q}).
\end{align*}
Consider the following diagram
\begin{equation*}
\xymatrix{Y\ar[r]\ar[d]^q\ar@/ ^-1.5pc/[dd]_{\pi_Y}&\bar{M}_{g,m+1}(\G(1,n),d)\ar[dd]\\
X\ar[rr]\ar[d]^{\pi_X}&&\bar{U}\ar[d]\\
S\ar[r]&\bar{M}_{g,m}(\G(1,n),d)\ar[r]^c&\bar{Q}_{g,m}(\G(1,n),d)}
\end{equation*}
where $q$ is the morphism contracting rational tails induced by $c$.
From the above we see that we need to construct a morphism from $$R(\pi_Y)_*g^*Q\otimes S^{\vee}\to R(\pi_X)_*\mathcal{Q}\otimes\mathcal{S}^{\vee}.$$
By the construction of the morphism $c$ we have that
\begin{equation*}
\mathcal{S}=q_*S(-E).
\end{equation*}
This shows that we have a morphism 
\begin{equation}\label{multiplic}q_*S^{\vee}\to\mathcal{S}^{\vee}.
\end{equation} Consider the following exact sequences
\begin{equation*}
0\to g^*\mathcal{O}\to g^*(S^{\vee})^{\oplus n}\to g^*T_{\G}\to0
\end{equation*}
on $Y$ and
\begin{equation*}
0\to\mathcal{O}\stackrel{j}{\rightarrow}(\mathcal{S}^{\vee})^{\oplus n}\to\mathcal{S}^{\vee}\otimes\mathcal{Q}\to0
\end{equation*}
on $X$. Pushing forward the first exact sequence and using (\ref{multiplic}) and that fact that $Rq_*\mathcal{O}_Y\simeq[\mathcal{O}_X]$ we obtain a morphism of distinguished triangles
\begin{equation*}
\xymatrix{Rq_*[\mathcal{O}_Y]\ar[r]\ar[d]&Rq_*[(g^*S^{\vee})^{\oplus n}]\ar[r]\ar[d]&Rq_*[g^*T_{\G}]\ar[d]\\
[\mathcal{O}_X]\ar[r]&[(g^*\mathcal{S}^{\vee})^{\oplus n}]\ar[r]& [\mathcal{S}^{\vee}\otimes\mathcal{Q}]}.
\end{equation*}
Here the brackets $[A]$ indicate a complex with the sheaf $A$ concentrated in zero. Pushing forward the last column along $\pi_X$ we obtain the required morphism $R(\pi_Y)_*g^*Q\otimes S^{\vee}\to R(\pi_X)_*\mathcal{Q}\otimes\mathcal{S}^{\vee}$. It can be checked that it is compatible with restrictions and it gives a morphism of complexes $R\pi_*f^*T_{\G(1,n)}\to c^*Rp_*RHom(\mathcal{S},\mathcal{Q})$ in the derived category of the moduli stack $\bar{M}_{g,m}(\G(1,n),d)$.


\end{proof}
\begin{lemma} Let $F$ be the cone of the morphism $$R\pi_*f^*T_{\G(1,n)}\to c^*Rp_*RHom(\mathcal{S},\mathcal{Q}).$$ Then, $F$ is a perfect complex.
\end{lemma}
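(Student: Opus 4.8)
The plan is to reduce the statement to the general principle that, inside the derived category $\mathcal{D}_{\bar{M}_{g,m}(\G(1,n),d)}$, the perfect complexes form a thick triangulated subcategory; in particular the cone of a morphism between two perfect complexes is again perfect. Thus it suffices to check that both the source $R\pi_*f^*T_{\G(1,n)}$ and the target $c^*Rp_*RHom(\mathcal{S},\mathcal{Q})$ of the morphism constructed in the previous lemma are perfect complexes, and the perfectness of $F$ will then be formal.

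First I would treat the source. The sheaf $f^*T_{\G(1,n)}$ is locally free on the universal curve $\bar{M}_{g,m+1}(\G(1,n),d)$, and $\pi$ is the projection of the universal curve, which is flat and proper of relative dimension one. Hence, locally on the base, $R\pi_*$ of this locally free sheaf is computed by a two-term complex of locally free sheaves (for instance by choosing a $\pi$-relatively ample sheaf and forming the associated complex, or directly via cohomology and base change), so $R\pi_*f^*T_{\G(1,n)}$ is perfect of amplitude $[0,1]$.

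Next I would treat the target. Since the derived pullback of a perfect complex is perfect, it is enough to show that $Rp_*RHom(\mathcal{S},\mathcal{Q})$ is perfect. Here I would invoke the computation already recorded in the previous lemma: because $\mathcal{S}$ is a line bundle, $RHom(\mathcal{S},\mathcal{Q})\simeq\mathcal{S}^{\vee}\otimes\mathcal{Q}$ and $Rp_*RHom(\mathcal{S},\mathcal{Q})=Rp_*(\mathcal{S}^{\vee}\otimes\mathcal{Q})$. The sheaf $\mathcal{S}^{\vee}\otimes\mathcal{Q}$ is flat over $\bar{Q}_{g,m}(\G(1,n),d)$, since $\mathcal{Q}$ is a flat family of quotients and $\mathcal{S}^{\vee}$ is locally free, and $p$ is flat and proper of relative dimension one. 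Therefore $Rp_*(\mathcal{S}^{\vee}\otimes\mathcal{Q})$ is perfect, and so is $c^*Rp_*RHom(\mathcal{S},\mathcal{Q})$.

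Combining the two computations, the morphism of the preceding lemma is a map between perfect complexes, whence its cone $F$ is perfect. The step I expect to require the most care is verifying perfectness of the two obstruction theories, and in particular pinning down the flatness hypotheses: that $\mathcal{S}$ is locally free and $\mathcal{Q}$ is flat over the base (both standard properties of the stable-quotient space of \cite{mop}), so that the two derived pushforwards along the families of curves genuinely land among the perfect complexes. Once both endpoints are known to be perfect, the conclusion for $F$ is purely formal and no explicit description of the cone is needed; alternatively, one could identify $F$ directly as $Rq_*$ of a perfect complex using the morphism of distinguished triangles built in the previous lemma, but the thick-subcategory argument is shorter.
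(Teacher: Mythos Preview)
Your argument establishes that $F$ is perfect in the generic sense (locally quasi-isomorphic to a bounded complex of vector bundles), but that is not what the lemma is really after. In this paper ``perfect'' is being used in accordance with the standing convention that perfect obstruction theories have amplitude contained in $[-1,0]$; correspondingly, the dual tangent complexes sit in $[0,1]$. The cone of a map between two complexes of amplitude $[0,1]$ lies \emph{a priori} only in $[0,2]$, so the thick-subcategory argument does not pin down the amplitude. And it is precisely the amplitude bound that matters: in the proof of Proposition~\ref{mapquot} the author invokes ``the fact that $h^{-2}(F)=0$'' (in the cotangent direction) to conclude that $E_c$ is a perfect obstruction theory in $[-1,0]$. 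Your proof does not supply this vanishing.

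The paper's proof is exactly designed to close this gap. Working fiberwise over a point $(C,f)$ with contraction $q:C\to\widehat{C}$, it shows that
\[
H^1(C,f^*S^{\vee}\otimes Q)\longrightarrow H^1\bigl(\widehat{C},S^{\vee}(\textstyle\sum d_ix_i)\otimes\widehat{Q}\bigr)
\]
is surjective, by observing that the cokernel of the sheaf map $S^{\vee}\otimes Q\to S^{\vee}(\sum d_ix_i)\otimes\widehat{Q}$ is supported in dimension zero and hence has no $H^1$. Surjectivity on $H^1$ is exactly what forces $h^2(F)=0$ in the tangent direction (equivalently $h^{-2}(F)=0$ after dualizing), so that $F$ has amplitude $[0,1]$. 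That is the missing idea in your proposal; once you add this surjectivity check, the formal cone argument you sketch is fine, but without it the lemma as needed downstream is not proved.
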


\begin{proof}
Let us consider $$f:(C,x_1,...,x_{m})\to\G$$ a stable map, $q:C\to\widehat{C}$ the morphism contracting the rational tails and let $x_1,...,x_p$ be the gluing points of the rational tails with the rest of the curve. On $\widehat{C}$ we have an induced stable quotient $$0\to S(-\sum d_ix_i)\to\mathcal{O}_{\widehat{C}}^{\oplus n}\to \widehat{Q}\to0.$$  Then we need to show that the morphism
\begin{equation*}
H^1(C, f^*S^{\vee}\otimes Q)\to H^1(\widehat{C}, S^{\vee}(\sum d_ix_i)\otimes \widehat{Q})
\end{equation*}
is surjective. Since $$H^1(\widehat{C}, f^*S^{\vee}(\sum d_ix_i)\otimes \widehat{Q}\simeq H^1(C, q^*S^{\vee}(\sum d_ix_i)\otimes \widehat{Q})$$ we need to show that 
\begin{equation*}
H^1(C, f^*S^{\vee}\otimes Q)\to H^1(C, f^*S^{\vee}(\sum d_ix_i)\otimes \widehat{Q})
\end{equation*}
is surjective. As the quotient of the morphism $S^{\vee}\otimes Q\to S^{\vee}(\sum d_ix_i)\otimes \widehat{Q}$ is supported in dimension zero, it has no higher cohomology. This shows that the above morphism is surjective.
\end{proof}

\begin{proposition}\label{mapquot}We have that
$$c_*[\bar{M}_{g,n}(\G(1,n),d)]^{\vv}=\bar{Q}_{g,n}(\G(1,n),d).$$
\end{proposition}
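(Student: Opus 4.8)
The plan is to apply Theorem~\ref{connected} to the morphism $c:\bar{M}_{g,m}(\G(1,n),d)\to \bar{Q}_{g,m}(\G(1,n),d)$, taking $\gamma=1$ (the fundamental cohomology class) so that $c_*[\bar{M}_{g,m}(\G(1,n),d)]^{\vv}$ becomes a scalar multiple of $[\bar{Q}_{g,m}(\G(1,n),d)]^{\vv}$, and then to pin down that the scalar equals $1$ and that the virtual class of $\bar{Q}_{g,m}(\G(1,n),d)$ is the fundamental class. To invoke the theorem I must first verify that $c$ is a \emph{virtually smooth morphism} in the sense of Definition~\ref{virtflat}. The two preceding lemmas supply exactly the required input: the first produces a morphism $R\pi_*f^*T_{\G(1,n)}\to c^*Rp_*R\mathcal{H}om(\mathcal{S},\mathcal{Q})$ of (the duals of) the relative obstruction theories of the two moduli spaces over $\mathfrak{M}_{g,m}$, and the second shows that its cone $F$ is a perfect complex. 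Dualizing, $F^{\vee}$ is the relative obstruction theory $E^{\bullet}_{c}$ of $c$, and the distinguished triangle built from the two lemmas realizes $(E^{\bullet}_{c},E^{\bullet}_{\bar{Q}/\mathfrak{M}},E^{\bullet}_{\bar{M}/\mathfrak{M}})$ as a compatible triple in the sense of Definition~\ref{compatib}, with $E^{\bullet}_{c}$ perfect.

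Next I would check the numerical and connectedness hypotheses. The virtual dimensions of $\bar{M}_{g,m}(\G(1,n),d)$ and $\bar{Q}_{g,m}(\G(1,n),d)$ over $\mathfrak{M}_{g,m}$ are both computed by the Euler characteristic $\chi(C,S^{\vee}\otimes Q)$ of the respective obstruction theories, and since the comparison of the previous lemma shows the relative obstruction theory $E^{\bullet}_{c}$ has vanishing virtual rank (the cone $F$ is supported in degrees where it contributes no rank, the defect being torsion supported on the contracted tails), the virtual relative dimension $k_1-k_2$ is $0$. Connectedness of $\bar{Q}_{g,m}(\G(1,n),d)$ I would deduce from connectedness of $\bar{M}_{g,m}(\G(1,n),d)$ together with the surjectivity of $c$, or cite it from \cite{mop}. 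With these in hand, Theorem~\ref{connected} gives $c_*[\bar{M}_{g,m}(\G(1,n),d)]^{\vv}=n\,[\bar{Q}_{g,m}(\G(1,n),d)]^{\vv}$ for some $n\in\Q$.

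It remains to identify the scalar $n$ and the target virtual class. For the scalar, I would restrict to a general point of $\bar{Q}_{g,m}(\G(1,n),d)$: since $c$ restricts to an isomorphism over the locus of smooth curves (Proposition on the existence of $c$), the fiber of $c$ over a general stable quotient is a single reduced point, so the local multiplicity $n$ computed by the fiberwise construction in the proof of Theorem~\ref{connected} equals $1$. For the target, over the same generic smooth locus $\bar{Q}_{g,m}(\G(1,n),d)$ is smooth of the expected dimension, so its virtual class coincides with its fundamental class there; combined with $n=1$ this yields $c_*[\bar{M}_{g,m}(\G(1,n),d)]^{\vv}=[\bar{Q}_{g,m}(\G(1,n),d)]$.

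The main obstacle I anticipate is the verification that the triple of obstruction theories is genuinely \emph{compatible} in the strict sense of Definition~\ref{compatib}, i.e.\ that the morphism of obstruction theories constructed in the first lemma lifts to a morphism of the full distinguished triangles of cotangent complexes commuting with the given triangle of $L_{\bullet}$'s. The two lemmas establish the existence of the comparison map and the perfection of its cone, but threading this through the cotangent-complex diagram—so that Theorem~\ref{functoriality} and hence Theorem~\ref{connected} apply—requires care with the Costello factorization through $\mathfrak{M}_{g,m,\beta}$ and with the identification $Rq_*\mathcal{O}_Y\simeq\mathcal{O}_X$ used to match the two obstruction theories along the contraction $q$.
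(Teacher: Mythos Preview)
Your overall strategy matches the paper's: use the two preceding lemmas to show that $c$ carries a perfect relative obstruction theory fitting into a compatible triple, deduce connectedness of $\bar{Q}_{g,m}(\G(1,n),d)$ from that of $\bar{M}_{g,m}(\G(1,n),d)$ and surjectivity of $c$, and then invoke Theorem~\ref{connected}. Your additional step identifying the scalar $n=1$ by restricting to the open locus of smooth curves, where $c$ is an isomorphism, is correct and in fact fills in a point the paper leaves implicit.

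There is, however, a genuine gap in how you construct the compatible triple. You write the triple as $(E^{\bullet}_{c},E^{\bullet}_{\bar{Q}/\mathfrak{M}},E^{\bullet}_{\bar{M}/\mathfrak{M}})$ and identify $E^{\bullet}_{c}$ with the dual of the cone $F$. But $\nu\circ c\neq\epsilon$: the composite $\nu\circ c$ sends a stable map to the \emph{contracted} curve, whereas $\epsilon$ remembers the original domain. Thus $E^{\bullet}_{\bar{M}/\mathfrak{M}}=E_{\epsilon}$ is \emph{not} an obstruction theory for $\nu\circ c$, and the triangle you propose does not map to the cotangent-complex triangle of the composition $\bar{M}\to\bar{Q}\to\mathfrak{M}$. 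The paper resolves this by working with the square
\[
\xymatrix{\bar{M}\ar[r]^{c}\ar[d]_{\epsilon}&\bar{Q}\ar[d]^{\nu}\\ \mathfrak{M}\ar[r]^{\mu}&\mathfrak{M}}
\]
where $\mu$ contracts rational tails. One first builds an obstruction theory $E$ for $\mu\circ\epsilon=\nu\circ c$ by completing $E_{\epsilon}[-1]\to\epsilon^*L_{\mu}$ to a triangle, and then defines $E_{c}$ as the cone of $c^*E_{\nu}\to E$. The octahedron axiom yields a distinguished triangle
\[
\epsilon^*L_{\mu}\to E_{c}\to F\to\epsilon^*L_{\mu}[1],
\]
so $E_{c}$ differs from $F$ precisely by the term $\epsilon^*L_{\mu}$; only then does $h^{-2}(F)=0$ from the second lemma imply that $E_{c}$ is perfect in $[-1,0]$. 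Your anticipated obstacle points to the Costello factorization through $\mathfrak{M}_{g,m,\beta}$, but that map is \'etale and plays no role here; the essential correction is the contraction morphism $\mu$ on the base and its cotangent complex $L_{\mu}$.
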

\begin{proof} Let us consider the following commutative diagram
\begin{equation}\label{moregeneral}
\xymatrix{\bar{M}_{g,n}(\G(1,n),d)\ar[r]^c\ar[d]^{\epsilon}&\bar{Q}_{g,n}(\G(1,n),d)\ar[d]^{\nu}\\
\mathfrak{M}\ar[r]^{\mu}&\mathfrak{M}}
\end{equation}
where $\mu$ is the map contracting the rational tails. The morphism obtained from the following composition $$E_{\epsilon}[-1]\to L_{\epsilon}[-1]\to\epsilon^* L_{\mu}$$  can be completed to a triangle $$E_{\epsilon}[-1]\to \epsilon^*L_{\mu}\to E_{\mu\circ\epsilon}\to E_{\epsilon},$$ where $E_{\epsilon}$ indicates the obstruction to the morphism $\epsilon$ described in section \ref{gw}. By the axioms of triangulated categories we obtain a morphism $E:=E_{\mu\circ\epsilon}\to L_{\mu\circ\epsilon}$, which it can be easily seen to be an obstruction theory to the morphism $\mu\circ\epsilon$. In a similar way we obtain a complex which we denote by $E_c$ such that the triangle
\begin{equation}\label{thetrueone}
c^*E_{\nu}\to E\to E_{c}\to c^*E_{\nu}
\end{equation}
is distinguished. By \cite{eu}, $E_c$ is an obstruction theory. By the octahedron axiom we obtain that the triangle
\begin{equation}
\epsilon^*L_{\mu}\to E_c\to F\to \epsilon^*L_{\mu}[1]
\end{equation}
is distinguished. From the long exact sequence in cohomology and the fact that $h^{-2}(F)=0$ we obtain that $E_c$ is a perfect obstruction theory for the morphism $c$. Triangle (\ref{thetrueone}) shows that we can apply theorem \ref{connected} to the composition of morphisms $$\bar{M}_{g,n}(\G(1,n),d)\stackrel{^c}\rightarrow\bar{Q}_{g,n}(\G(1,n),d)\stackrel{^{\nu}}\rightarrow\mathfrak{M}.$$ 
\\As $c$ is surjective and $\bar{M}_{g,m}(\G(1,n),d)$ is connected, we get that  $\bar{Q}_{g,m}(\G(1,n),d)$ is connected. The claim now follows from theorem \ref{connected}.
\end{proof}
\begin{remark} The proofs of propositions \ref{projbdl} and \ref{mapquot}  show that whenever we have a commutative diagram
\begin{equation*}
\xymatrix{F\ar[r]^c\ar[d]^{\epsilon}&G\ar[d]^{\nu}\\
\mathfrak{M}_1\ar[r]^{\mu}&\mathfrak{M}_2}
\end{equation*}
where
\begin{enumerate}
\item the bottom row is a morphism of smooth stacks
\item the vertical arrows have (relative) perfect obstruction theories $E_{\epsilon}$, $E_{\nu}$
\item we have a morphism $c^* E_{\nu}\to E_{\epsilon}$ such that its cone is a perfect complex
\end{enumerate} 
then, Theorem \ref{connected} applies to this more general picture.
\end{remark}

Humboldt-Universit\"at zu Berlin
\\Institut f\"ur Mathematik
\\manolach@mathematik.hu-berlin.de

\end{document}